\documentclass[11pt]{amsart}
\usepackage{geometry}                % See geometry.pdf to learn the layout options. There are lots.
\geometry{a4paper}                   % ... or a4paper or a5paper or ... 
\usepackage{graphicx}
\usepackage{amssymb}
\usepackage{color}
%\usepackage{epstopdf}
%\DeclareGraphicsRule{.tif}{png}{.png}{`convert #1 `dirname #1`/`basename #1 .tif`.png}
\newcommand{\BBr}{\mathbb{R}}

\newcommand{\BBe}{\mathbb{E}}
\renewcommand{\P}{\mathbb{P}}
\newcommand{\Ea}{\mathbb{E}_\alpha}
\DeclareMathOperator{\dom}{dom}
\newtheorem{theorem}{Theorem}[section]
\newtheorem{lemma}{Lemma}[section]
\newtheorem{proposition}{Proposition}[section]
\newtheorem{remark}{Remark}[section]

\newtheorem{definition}{Definition}[section]
\title{Bregman superquantiles. Estimation methods and applications}
\author{T. Labopin-Richard \and F. Gamboa \and A. Garivier \and B. Iooss}
\address{FG AG TLR  and BI are with the Institut de Math\'ematiques de Toulouse (CNRS UMR 5219). Universit\'e Paul Sabatier, 118 route de Narbonne, 31062 Toulouse, France. BI is also with EDF R\&D.}

\date{\today}                                           % Activate to display a given date or no date

\begin{document}
\maketitle

\begin{abstract}
In this work, we extend some parameters built on a probability distribution introduced before to the case where the proximity between real numbers is measured by using a Bregman divergence. This leads to the definition of the Bregman superquantile (that we can connect with several works in economy, see for example \cite{eco1} or \cite{gini}). Axioms of a coherent measure of risk discussed previously (see \cite{def} or \cite{coherent}) are studied in the case of Bregman superquantile. Furthermore, we deal with asymptotic properties of a Monte Carlo estimator of the Bregman superquantile.
Several numerical tests confirm the theoretical results and an application illustrates the potential interests of the Bregman superquantile.
\end{abstract}

\section{Introduction}

\subsection{Aim and scope}

The aim of this article is to define and to study properties and estimation procedures for Bregman extension of the superquantile defined in \cite{rocury00} or in \cite{ref24} (see also \cite{rocroy13}, \cite{rocroy13a} and references therein). In the introduction we first recall the necessary conditions for a measure of risk to be coherent. Further in Section 2 we present the superquantile as a partial response to this problem. We also introduce the Bregman superquantile and study axioms of a coherent measure of risk for this quantity. In Section 3 we seek to estimate this Bregman superquantile, we introduce a plug-in estimator and study its convergence and its asymptotic normality. Some numerical simulations are shown in Section 4. An application on real data of radiological exposure is given in Section 5. All the proofs are postponed to Section 6.

%\section{}
%\subsection{}
\subsection{Coherent measures of risk}

Let $X$ be a real-valued random variable and let $F_X$ be its cumulative distribution function. We define for $u\in]0,1[$, the quantile function

$$F^{-1}_X(u):=\inf\{x: F_X(x)\geq u\}.$$

A usual way to quantify the risk associated with $X$ is to consider, for a given number $\alpha\in]0,1[$ close to 1, its lower quantile $q_\alpha^X:=F^{-1}_X(\alpha)$.

Nevertheless, the  quantile is not a subadditive function of $X$, a major property in some applications (e.g. finance, see \cite{coherent}). Moreover, the quantile does not give any information about what is happening in the distribution-tail above the quantile (which can be dangerous when we deal with insurance premiums for example). In \cite{def} a new quantity called superquantile satisfying the property of subadditivity and giving more information about the distribution-tail is introduced. The superquantile is defined by  

$$Q_{\alpha}:=Q_{\alpha}(X)= \mathbb{E}(X | X \geq q_{\alpha}^X)=\mathbb{E}(X | X \geq F_X^{-1}(\alpha))$$

We can notice that $Q^{\alpha}$ is always well defined as an element of $\bar{\mathbb{R}}= \mathbb{R} \cup \{ + \infty\}$. Indeed, if the expectation is not finite we may set it to $+ \infty$. We indeed have 

$$Q_{\alpha} = \frac{\mathbb{E} \left(X \mathbf{1}_{X \geq F_X^{-1}(\alpha)} \right)}{\mathbb{P}\left(X \geq F_X^{-1}(\alpha)\right)} \geq  F_X^{-1}(\alpha) \frac{\mathbb{E}\left(\mathbf{1}_{X \geq  F_X^{-1}(\alpha)}\right)}{\mathbb{P}\left(X \geq  F_X^{-1}(\alpha)\right)}= F_X^{-1}(\alpha).$$

\begin{remark}
In particular, when $F_X\left(F_X^{-1} (\alpha)\right)=\alpha$, the Bayes formula gives us

$$Q_{\alpha} =\mathbb{E} \left(X | X \geq F_X^{-1}(\alpha)\right)= \mathbb{E}\left(\frac{X \mathbf{1}_{ X \geq F_X^{-1}(\alpha)}}{1-\alpha}\right).$$

\end{remark}

\vspace{0.3cm}

From now, we always deal with random variables $X$ which distribution are continuous (that is $F_X$ is continuous and then $F_X\left(F_X^{-1} (\alpha)\right)=\alpha$). 

\vspace{0.3cm}

Notice that this quantity is also called conditional value at risk in other references (\cite{rocury00}, \cite{rocury13}, \cite{rocroy13a}). Further, (when $F$ is conitnuous), it is also a distortion measure of risk studied for example in \cite{ref1}, \cite{ref13}, \cite{ref14}, \cite{ref15}, \cite{ref16}, \cite{ref24}, \cite{ref25}, \cite{ref26} and \cite{lui}. In these papers, a distortion measure of risk is the quantity

$$\rho_g(X) := - \int_{\mathbb{R}} x dg\left( F_X(x) \right),$$

where $g$, called the distortion function, is a map from $[0, 1]$ to $[0, 1]$. $g$ is assumed to be nondecreasing and such that $g(0)=0$ and $g(1)=1$. Then, taking $$g(x)=\alpha \left( \dfrac{x}{\alpha} \wedge 1\right),$$ we get 

$$\rho_g(X)= \alpha Q_{1-\alpha}(-X).$$

%where $q_{\alpha}$ is the usual quantile and $F$ the cumulative distribution function. 

\vspace{0.3cm}

Sub-additivity is not the sole interesting property for a measure of risk (for example for financial applications). Following \cite{def} we define:

\begin{definition}
\label{deco}
Let $\mathcal{R}$ be a measure of risk that is a numerical function defined on random variables. Let $X$ and $X'$ be two real-valued random variables.
We say that $\mathcal{R}$ is coherent if, and only if, it satisfies the five following properties : 
\begin{itemize}
\item[i)] Constant invariance : let $C\in\BBr$, if $X=C$ (a.s.)  then $\mathcal{R}(C)=C$.
\item[ii)] Positive homogeneity :  $\forall \lambda > 0$, $\mathcal{R}(\lambda X) = \lambda \mathcal{R}(X)$.
\item[iii)] Subaddidivity : $\mathcal{R}(X + X') \leq \mathcal{R}(X) + \mathcal{R}(X')$.
\item[iv)] Monotonicity : If $X \leq X'$ (a.s.) then $\mathcal{R}(X) \leq \mathcal{R}(X')$.
\item[v)]  Closeness : Let $(X_h)_{h\in\BBr}$ be a collection of random variables.\\
If $\mathcal{R}(X_h) \leq 0$ and ${\displaystyle \lim_{h\rightarrow 0}||X_{h}-X||_{2}=0}$ then $\mathcal{R}(X) \leq 0$.
\end{itemize}
\end{definition}

The superquantile is a coherent measure of risk (see \cite{archiv}, \cite{croissance}, \cite{On} for direct proofs). More generally, Wang and Dhaene show in \cite{ref26} that a distortion risk measure is coherent if and only if the distortion function is concave (which holds in our case).

\begin{remark}
In the litterature, alternative set of axioms for coherent measure of risks have been considered (in \cite{cono1}, \cite{ref26} or \cite{coherent} for example,  additivity for a particular class of risk (comonotonic risks) is also studied). In this paper we will only focus on the Rockafellar's definition of a coherent measure of risk (see \cite{def}).

Besides, theoretical results have also been shown for coherent measures of risk. These measures can indeed be represented by suprema of linear functionals (see for example \cite{kusu} for the Kusuoka representation or \cite{coherent} for the scenarios set representation). We will not be interested in such representations here.  
\end{remark}

\section{Bregman superquantiles}
In this section the aim is to build a general measure of  risk that satisfies some of the regularity axioms stated in Definition \ref{deco}.  These quantities will be built by using a dissimilarity measure beetween real numbers, the Bregman divergence (see \cite{bre67}).
\subsection{Bregman divergence,  mean and superquantile}
In this section we first  recall the definition of  the Bregman mean of a probability measure $\mu$ (see \cite{bencha89}) and define the measure of risk that we will study. To begin with, we recall the definition of the Bregman divergence that will be used to build the Bregman mean. 
Let $\gamma$ be a strictly convex function, $\overline{\BBr}$-valued on $\BBr$. As usual we set 
$$\dom_{\gamma} :=\{x\in\BBr:\gamma(x)<+\infty\}.$$
For sake of simplicity we assume that $\dom_{\gamma}$ is a non empty open set and that $\gamma$ is a closed proper differentiable function on the interior of $\dom_{\gamma}$ (see \cite{Rockaconvex}). From now we always consider function $\gamma$ satisfying this assumption. The Bregman divergence $d_{\gamma}$ associated to $\gamma$ (see \cite{bre67}) is a function defined on $\dom_{\gamma}\times \dom_{\gamma}$ by
$$d_\gamma(x,x'):= \gamma(x)-\gamma(x')-\gamma'(x')(x-x')\;\;, \; \;  (x,x'\in \dom_{\gamma}).$$
The Bregman divergence is not a distance as it is not symmetric. Nevertheless, as it is non negative and vanishes, if and only if, the two arguments are equal, it quantifies the proximity of points in  $\dom_{\gamma}$. Let us recall some classical examples of such a divergence.
\begin{itemize}
\item Euclidean. $\gamma(x)=x^2$ on $\BBr$, we obviously obtain, for $x,x'\in\BBr$,  
$$d_\gamma(x,x')=(x-x')^2.$$
\item Geometric. $\gamma(x)=x\ln(x)-x+1$ on $\BBr_+^*$ we obtain, for $x,x'\in\BBr_+^*$,  
$$d_\gamma(x,x')=x\ln\frac{x}{x'}+x'-x.$$
\item Harmonic. $\gamma(x)=-\ln(x)+x-1$ on $\BBr_+^*$ we obtain, for $x,x'\in\BBr_+^*$,  
$$d_\gamma(x,x')=-\ln\frac{x}{x'}+\frac{x}{x'}-1.$$
\end{itemize}
Let $\mu$ be a probability measure whose support is included in $\dom_{\gamma}$  and that does not weight the boundary of $\dom_{\gamma}$. Assume further that $\gamma'$ is integrable with respect to $\mu$. Following  \cite{bencha89}, we first define the Bregman mean as the unique point $b$ in the support of $\mu$ satisfying 

\begin{equation}
\int d_\gamma(b, x)\mu(dx)=\min_{m\in\dom_{\gamma}}\int d_\gamma(m, x)\mu(dx).
\label{eq1} 
\end{equation}

In fact, we replace the  $L^{2}$ minimization in  the definition of the mathematical classical expectation by the minimization of the Bregman divergence.
Existence and uniqueness come from the convexity properties of $d_\gamma$ with respect to its first argument. By differentiating it is easy to see that
$$b=\gamma^{'-1}\left[\int\gamma'(x)\mu(dx)\right].$$
Hence, coming back to our three previous examples, we obtain the classical mean  in the first example (Euclidean case), the geometric mean ($\exp\int\ln(x)\mu(dx)$), in the second one and the harmonic mean  ($[\int x^{-1}\mu(dx)]^{-1}$), in the third one. 
%We  call the previous function $\gamma$, respectively, the Euclidean function, the Bregman geometric function and the Bregman harmonic function.
Notice that, as the Bregman divergence is not symmetric, 
we have to pay attention to the definition of the Bregman mean. Indeed, we have
$$\int d_\gamma(x, \BBe(X) )\mu(dx)=\min_{m\in\dom_{\gamma}}\int d_\gamma(x,m)\mu(dx).$$ 
We turn now to the definition of our new measure of risk.

\begin{definition}
Let $\alpha\in]0,1[$, the Bregman superquantile $Q_{\alpha}^{d_\gamma}$ is defined by
$$Q_{\alpha}^{d_\gamma}:= \gamma'^{-1}\Big(\mathbb{E}(\gamma'(X)|X \geq  F_{X}^{-1}(\alpha))\Big)=\gamma'^{-1}\left[\mathbb{E}\left(\frac{\gamma'(X)\mathbf{1}_{X \geq  F_{X}^{-1}(\alpha)}}{1-\alpha}\right)\right] $$
where the second egality holds because $F_X$ is continuous. 
In words $Q_{\alpha}^{d_\gamma}$ satisfies (\ref{eq1}) taking for $\mu$ the distribution of $X$ conditionally to 

$X \geq F_X^{-1}(\alpha)$. We now denote $Q_{\alpha}^{d \gamma}$ the Bregman superquantile of the random variable $X$ when there is no ambiguity and $Q_{\alpha}^{d \gamma}(X)$ if we need to distinguish Bregman superquantile of different distributions.

\label{desupbreg}

\end{definition}

\vspace{0.3cm}

For the same reasons as before, the Bregman superquantile is always well-defined as an element of $\bar{\mathbb{R}}$. Moreover, we can already see an advantage of the Bregman superquantile over the classical superquantile. Indeed, some real random variables are not integrable (and so the superquantile is egal to $+ \infty$), but thanks to a choice of a very regular function $\gamma$, the Bregman superquantile can be finite. For example, let us introduce $X$ from the one side Cauchy distribution, that is having density function 

$$f(x)= \frac{2}{\pi(1+x^2)} \mathbf{1}_{x \geq 0}.$$

Since $X$ is not integrable, its classical superquantile is egal to $+ \infty$. Nevertheless, considering the Bregman superquantile associated to the strictly convex fonction 

$\gamma(x)= x\ln(x)-x+1$, we have 

$$\mathbb{E} \left( \gamma'(X) \mathbf{1}_{X \geq F^{-1}_X(\alpha)}\right) < + \infty$$

because the function $x \mapsto \frac{\ln(x)}{1+x^2}$ is integrable on $[0,+\infty[$.

\vspace{0.3cm}

\textbf{Interpretation of the Bregman superquantile :} As a matter of fact we have

\begin{equation}
Q_{\alpha}^{d \gamma}(X)=\gamma'^{-1}\left(Q_{\alpha}(\gamma'(X)\right).
\label{link}
\end{equation}

Indeed, denoting $Z=\gamma'(X)$, as $\gamma'\left(F^{-1}_{X}(\alpha)\right) = F_{Z}^{-1}(\alpha)$, so that

$$\mathbb{E}\left(\gamma'(X) |X > F_{X}^{-1}(\alpha)\right) = \mathbb{E}\left(Z |Z > F_{Z}^{-1}(\alpha)\right).$$

Thus, the Bregman superquantile can be interpreted in the same way that a superquantile under a change of scale. In other words : fix a threshold $\alpha$ and compute the corresponding quantile. Further, change the scale $X \mapsto \gamma'(X)$ and compute the corresponding mean. At last, apply the inverse change of scale to come back to the true space. 

\vspace{0.3cm}

This natural idea has already been used in economy. For example, noticing that the classical Gini index does not satisfy properties that are essential to ensure a reliable modelisation, Satya et al. introduced in \cite{gini} generalized Gini index thanks to a similar change of scale allowing the index to satisfy these properties.

\vspace{0.3cm}

In our case, the main interest of this new measure of risk is also in the change of scale. Indeed, choosing a slowing varying convex function $\gamma$ leads to a more \textit{robust} risk allowing a statistical esimation with better statistical properties (we show for example in Section 3 that empirical estimator for classical superquantile is not always consistent when $X$ has a Pareto distribution, whereas it is always the case with the Bregman superquantile).

\begin{remark}
The Bregman superquantile has a close link with the weighted allocation functional in the capital allocation fields. Indeed, in \cite{eco1}, this quantity is defined as :

$$A_{w}[U, V]:= \frac{E(Uw(V))}{w(V)}$$

where $U$ and $V$ are two real random variables and $w$ is a given map from $\mathbb{R}^+$ to $\mathbb{R}^+$. Choosing $U=X$, $V=\gamma'(X)$ and $w(V)=\mathbf{1}_{V \geq Q_{\alpha}^V}$, we obtain

$$A_{w}[X, \gamma'(X)]=\gamma'\left(Q_{\alpha}^{d\gamma}(X)\right).$$

%Then, our resultats (in particular in Section 3) could be applied to deal with the estimation of the weighted allocation functional. 
\end{remark}

%\section{Bregman superquantile and coherent measure of risk}

\subsection{Coherence of Bregman superquantile}

The following proposition gives some conditions under which the Bregman superquantile is a coherent measure of risk. 

\begin{proposition}
\label{coherence}

Fix $\alpha$ in $]0, 1[$.
$\;$
\begin{itemize}
\item[i)] Any Bregman superquantile always satisfies the properties of constant invariance and monotonicity.
\item[ii)] The Bregman superquantile associated to the function $\gamma$ is homogeneous, if and only if, 

$\gamma''(x) = \beta x^{\delta}$ for some real numbers $\beta>0$ and $\delta$ (as $\gamma$ is convex, if the support of $\gamma$ is strictly included in $\mathbb{R}^{+}_*:=]0, + \infty[$ there is no condition on $\delta$ but if not, $\delta$ is an even number).
\item[iii)] If $\gamma'$ is concave and sub-additive, then subadditivity and closeness axioms both hold. 
\end{itemize}
\end{proposition}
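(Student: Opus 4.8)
The plan is to treat the three items separately, exploiting throughout the representation $Q_{\alpha}^{d\gamma}(X)=\gamma'^{-1}\bigl(Q_{\alpha}(\gamma'(X))\bigr)$ from \eqref{link} together with the known coherence of the classical superquantile $Q_{\alpha}$ and the fact that $\gamma'$ is a continuous increasing bijection from $\dom_{\gamma}$ onto its image.

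For item i), constant invariance is immediate: if $X=C$ a.s.\ then $\gamma'(X)=\gamma'(C)$ a.s., so $Q_{\alpha}(\gamma'(X))=\gamma'(C)$ by constant invariance of the classical superquantile, and applying $\gamma'^{-1}$ gives $C$. For monotonicity, if $X\le X'$ a.s., then since $\gamma'$ is nondecreasing we get $\gamma'(X)\le\gamma'(X')$ a.s.; monotonicity of $Q_{\alpha}$ yields $Q_{\alpha}(\gamma'(X))\le Q_{\alpha}(\gamma'(X'))$, and since $\gamma'^{-1}$ is also nondecreasing the inequality is preserved. These are both one-line arguments once \eqref{link} is in hand.

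For item iii), assume $\gamma'$ is concave and subadditive. For subadditivity I would write $Q_{\alpha}^{d\gamma}(X+X')=\gamma'^{-1}\bigl(Q_{\alpha}(\gamma'(X+X'))\bigr)$ and use subadditivity of $\gamma'$ pointwise, $\gamma'(X+X')\le\gamma'(X)+\gamma'(X')$, together with monotonicity of $Q_{\alpha}$, to get $Q_{\alpha}(\gamma'(X+X'))\le Q_{\alpha}(\gamma'(X)+\gamma'(X'))\le Q_{\alpha}(\gamma'(X))+Q_{\alpha}(\gamma'(X'))$ by subadditivity of $Q_{\alpha}$. But then one must pass back through $\gamma'^{-1}$, which is \emph{convex} (being the inverse of a concave increasing function), so $\gamma'^{-1}(a+b)$ need not be $\le \gamma'^{-1}(a)+\gamma'^{-1}(b)$ in general; here I expect the subtlety to be that one needs $\gamma'^{-1}$ superadditive, or rather to compare $\gamma'^{-1}(Q_\alpha(\gamma'(X)+\gamma'(X')))$ with $\gamma'^{-1}(Q_\alpha(\gamma'(X)))+\gamma'^{-1}(Q_\alpha(\gamma'(X')))$ using that a convex function vanishing at $0$ is superadditive on $\BBr_+$ — this is where the sign conditions on the support must be used, and this is the main obstacle. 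Closeness should follow from continuity of $\gamma'$ and $\gamma'^{-1}$ together with the closeness of $Q_\alpha$: if $\|X_h-X\|_2\to 0$ then I would argue $\|\gamma'(X_h)-\gamma'(X)\|_2\to 0$ (using concavity/Lipschitz-type control of $\gamma'$, or at least local control), and then $Q_{\alpha}^{d\gamma}(X_h)\le 0 \iff Q_\alpha(\gamma'(X_h))\le\gamma'(0)$, pass to the limit via closeness of $Q_\alpha$, and conclude.

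For item ii), homogeneity means $Q_{\alpha}^{d\gamma}(\lambda X)=\lambda Q_{\alpha}^{d\gamma}(X)$ for all $\lambda>0$ and all admissible $X$. Using \eqref{link} and the analogous representation for $\lambda X$, and the identity $\gamma'\bigl(F_{\lambda X}^{-1}(\alpha)\bigr)=\gamma'(\lambda q_\alpha^X)$, the condition becomes: for all $\lambda>0$ and all $x_0$ in the relevant range,
$$\gamma'^{-1}\!\left(\BBe\bigl[\gamma'(\lambda X)\mid X\ge x_0\bigr]\right)=\lambda\,\gamma'^{-1}\!\left(\BBe\bigl[\gamma'(X)\mid X\ge x_0\bigr]\right),$$
i.e.\ $\gamma'(\lambda y)=g(\lambda)\gamma'(y)+h(\lambda)$ type functional relations; more precisely the natural route is to show this forces $\gamma'(\lambda x)=a(\lambda)\gamma'(x)+b(\lambda)$ for scalar functions $a,b$, which by a standard argument (differentiate in $x$, or use the theory of the Cauchy/Pexider functional equation under the regularity of $\gamma'$) gives $\gamma'(x)=c_1 x^{\delta+1}+c_2$ when $\delta\ne-1$ and a logarithmic form when $\delta=-1$, equivalently $\gamma''(x)=\beta x^{\delta}$. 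The even-$\delta$ restriction when $\dom_\gamma$ meets $\BBr_-$ comes from requiring $\gamma''>0$ on all of $\dom_\gamma$. I would prove necessity by plugging in a conveniently chosen two-point conditional distribution to extract the functional equation, and sufficiency by direct substitution (a short computation showing $\gamma'^{-1}(\BBe[\gamma'(\lambda X)\mid\cdot])=\lambda\gamma'^{-1}(\BBe[\gamma'(X)\mid\cdot])$ when $\gamma'$ is affine in $x^{\delta+1}$). The functional-equation step in ii) and the sign/superadditivity issue in iii) are the two places needing genuine care; the rest is bookkeeping through \eqref{link}.
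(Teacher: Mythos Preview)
Your plan for i) and ii) matches the paper's proof closely: constant invariance and monotonicity via \eqref{link} and the monotonicity of $\gamma'$, $\gamma'^{-1}$; and for ii), sufficiency by direct substitution and necessity by testing against a two-point conditional law to extract a multiplicative Cauchy-type functional equation for $\gamma''$. That part is fine.

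The genuine gap is in iii). For subadditivity you correctly reach
\[
\gamma'\bigl(\mathcal R(X+X')\bigr)\;\le\;\gamma'\bigl(\mathcal R(X)\bigr)+\gamma'\bigl(\mathcal R(X')\bigr),
\]
but your proposed last step --- applying $\gamma'^{-1}$ and hoping for subadditivity --- fails in exactly the way you fear: if $\gamma'$ is increasing and subadditive then $\gamma'^{-1}$ is \emph{super}additive (set $u=\gamma'(a)$, $v=\gamma'(b)$ in $\gamma'(a+b)\le\gamma'(a)+\gamma'(b)$), so the inequality goes the wrong way. The paper does not try to invert; instead it reformulates the goal as $\gamma'\!\bigl(\tfrac{1}{2}\mathcal R(S)\bigr)\le\gamma'\!\bigl(\tfrac{1}{2}(\mathcal R(X)+\mathcal R(X'))\bigr)$ and brings concavity of $\gamma'$ in at the level of the \emph{arguments} (the midpoint), combined with a direct Artzner--Delbaen--Eber--Heath style indicator computation showing
\[
\mathbb{E}\!\left[\gamma'(X)\mathbf 1_{X\ge q_\alpha^X}+\gamma'(X')\mathbf 1_{X'\ge q_\alpha^{X'}}-\gamma'(S)\mathbf 1_{S\ge q_\alpha^{S}}\right]\ge 0,
\]
obtained by bounding $\gamma'(S)\le\gamma'(X)+\gamma'(X')$ pointwise (subadditivity of $\gamma'$) and then comparing $\gamma'(X)(\mathbf 1_{X\ge q_\alpha^X}-\mathbf 1_{S\ge q_\alpha^S})$ with $\gamma'(q_\alpha^X)(\mathbf 1_{X\ge q_\alpha^X}-\mathbf 1_{S\ge q_\alpha^S})$. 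So the paper uses the concavity hypothesis on $\gamma'$ itself rather than trying to push inequalities through $\gamma'^{-1}$; your route, as you suspected, cannot be closed.

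For closeness your plan also diverges from the paper and has a hole: from $\|X_h-X\|_2\to 0$ you cannot in general conclude $\|\gamma'(X_h)-\gamma'(X)\|_2\to 0$ without a global Lipschitz bound on $\gamma'$, which is not assumed. The paper avoids this entirely: it first uses the subadditivity just proved to write $\mathcal R(X)\le\mathcal R(X_h)+\mathcal R(X_h-X)\le\mathcal R(Y_h)$ with $Y_h:=X_h-X$, so it suffices to show $\mathcal R(Y_h)\to 0$. Then, by the conditional Jensen inequality (concavity of $\gamma'$),
\[
\mathcal R(Y_h)=\gamma'^{-1}\!\bigl(\mathbb{E}[\gamma'(Y_h)\mid Y_h\ge q_\alpha^{Y_h}]\bigr)\le \mathbb{E}\bigl[Y_h\mid Y_h\ge q_\alpha^{Y_h}\bigr],
\]
and the right-hand side is controlled by $\|Y_h\|_2$ via Cauchy--Schwarz. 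This is the step you are missing.
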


The proof of this proposition, like all the others, is differed to Section 5.

To conclude, under some regularity assumptions on $\gamma$, the Bregman superquantile is a coherent measure of risk. Let us take some examples.

\subsubsection{Examples and counter-examples}

\begin{itemize}
\item \textbf{Example 1 :}  $x \mapsto x^{2}$ satisfies all the hypothesis but it is already known that the classical superquantile is subaddtive.
\item \textbf{Example 2 :} The Bregman geometric and harmonic functions satisfies the assumptions i) and ii). Moreover, their derivatives are, respectively, $x \mapsto \gamma'(x) = \ln(x)$ and $x \mapsto \gamma'(x)= \frac{x-1}{x}$  which are concave but subadditive only on $[1, + \infty[$. Then the harmonic and geometric functions satisfy iii) not for all pairs of random variables but only for pairs $(X, X')$ such that, denoting $Z:=X+X'$ we have

 $$\min \left(q_{\alpha}^{X}(\alpha), q_{\alpha}^{X'}(\alpha), q_{\alpha}^{Z}(\alpha) \right) >1.$$ 
 
\item \textbf{Example 3 :} A classical strictly convex function in economy (for example for the computations of the extended Gini index in \cite{gini}) is the function $\gamma(x)=x^{\alpha}$ with $\alpha >1$ when considering random variables which support is included in $R^+:=[0, + \infty[$. This convex function satisfies axiom ii) of our proposition, so the associated Bregman superquantile is homogeneous. Moreover, $\gamma'(x)=\alpha x^{\alpha-1}$ is concave if, and only if, $\alpha < 2$. In this case, it is subadditive on $[0, + \infty[ $ as a concave function such that $f(0) \geq 0$. Finally, the Bregman superquantile associated to the function $\gamma(x)=x^{\alpha}$, $1< \alpha <2$ is a coherent measure of risk when considering non-negativ random variables. 

\item \textbf{Counter-example 4 :} The subadditivity is not true in the general case. Indeed, let $\gamma(x)=\exp(x)$ and assume that $X \sim \mathcal{U}\left([0, 1]\right)$.

$$\mathbb{E}\left(\gamma'(X) \mathbf{1}_{X \geq F_{X}^{-1}( \alpha)}\right) = \int_{\alpha}^{1} \exp( x) dx = e - \exp(\alpha).$$

Then, denoting $\mathcal{R}(V)=Q_{\alpha}^{d\gamma}(V)$ for $V$ a random variable, we get

$$ \mathcal{R}( X) = \ln\left( \frac{e - \exp(\alpha)}{1-\alpha}\right).$$

Moreover, 

$$\mathcal{R}(\lambda X)= \ln \left(\int_{\alpha}^{1} \exp( \lambda x) dx\right)=\ln \left( \frac{\exp(\lambda) - \exp((\alpha) \lambda)}{\lambda(1-\alpha)} \right).$$

For $\alpha=0.95$ and $\lambda =2$, we obtain 

$$\mathcal{R}(2X) - 2\mathcal{R}(X) = \mathcal{R}(X+X) - \left(\mathcal{R}(X)+\mathcal{R}(X) \right) =  0.000107 >0,$$
and subadditivity fails. 

We can also notice that for $\lambda=4$

$$\frac{\mathcal{R}(4 X)}{4 \mathcal{R}(X)} = 1, 000321,$$

and the positive homogeneity is not true which is coherent with the Proposition \ref{coherence} since the derivative of $\gamma$ does not fulfill the assumption. 

\end{itemize}

\subsection{Remarks toward other natural properties}

We study the Bregman superquantile as a measure of risk. It is then natural to wonder if other classical properties of measure of risk are satisfied by this new quantity. Let us make some remarks.

\begin{itemize}
\item[1)] First, we can study the continuity of the Bregman superquantile. A condition for classical superquantile to be continuous, that is to have 

$$X_n \underset{a.s}{\longrightarrow} X \, \, \Rightarrow Q_{\alpha}(X_n) \rightarrow Q_{\alpha}(X_n)$$

is that the sequence $(X_n)$ is equi-integrable. Then the continuity of the Bregman superquantile is true when the sequence $\left(\gamma'(X_n)\right)_n$ is equi-integrable. 

We thus put forward an other advantage of the Bregman superquantile over the classical superquantile, because the transformation throught $\gamma$ can regularized the sequence and make it equi-integrable. Indeed, let us consider a sample $(X_n)$ of independent copies of $X$ where $X$ has the truncated Cauchy distribution. We have already seen that $X$ it not integrable. Then the sequence $(X_n)$ is not bounded in $L^1$ and so not equi-integrable. But, with the function $\gamma(x)=x\ln(x)-x+1$, the random variable $\gamma'(X)$ is integrable. Then the independent sample $\left(\gamma'(X_n) \right)_n$ is equi-integrable.

\item[2)] The relation exhibited in Equation (\ref{link}) allows us to deduce some properties for the Bregman superquantile from the classical superquantile. For example, Gneiting et al. show in \cite{sqeli} that the classical superquantile is not elicitable (notion introduced in \cite{eli}). Then, an easy proof by reduction show that, the Bregman superquantile is not elicitable. Likewise, Cont et al. show in \cite{robu} that the classical superquantile is not robust (in particular because it is not subadditive). A direct consequence (because the fonction $\gamma'$ is continuous and the Levy distance is the distance associated to the weak convergence) is that the Bregman superquantile is not robust either. The Bregman superquantile is another example of what Cont et al. calls the conflict between subadditivity and robustness.

\end{itemize}

\section{Estimation of the Bregman superquantile}

In this section the aim is to estimate the Bregman superquantile from a sample. We introduce a Monte Carlo estimator and study its asymptotics properties. Under regularity assumptions on the functions $\gamma$ and $F_X^{-1}$, the Bregman superquantile is consistent and asymptotically Gaussian. All along this section, we consider a function $\gamma$ satisfying our usual properties and a real-valued random variable $X$ such that $\gamma'(X)\textbf{1}_{X \geq 0}$ is integrable.

\subsection{Monte Carlo estimator}
Assume that we have at hand $(X_{1}, \dots, X_{n})$ an i.i.d sample with same distribution as $X$. If we wish to estimate $Q^{d_{\gamma}}_{\alpha}$, we may use the following empirical estimator :

\begin{equation}
\begin{aligned}
\label{estimateurSQB}
\widehat{Q_{\alpha}^{d_{\gamma}}}&= \gamma^{'-1}\left[ \frac{1}{1-\alpha} \left( \frac{1}{n} \sum_{i= \lfloor n \alpha \rfloor +1} ^{n} \gamma'(X_{(i)})\right) \right],
\end{aligned}
\end{equation}

 where $X_{(1)} \leq X_{(2)} \leq \dots \leq X_{(n)}$ is the re-ordered sample built with $(X_{1}, \dots, X_{n})$.

\subsection{Asymptotics}

We give a theorem which gives the asymptotic behaviour of the Bregman superquantile. The following assumptions are usefull for our next theorem.

\begin{itemize}

%\item [\textbf{H1)}] $\gamma$ is twice differentiable, and the derivative of $\left(\gamma' \circ F_X^{-1}\right)$ that we denote by $l_{\gamma}$ satisfies $l_{\gamma}=o \left( (1-t)^{-2} \right)$ and is non-decreasing when $t$ goes to $1^-$.
%
%\item[\textbf{H2)}] $\gamma$ is three times differentiable, and the second derivative of $\left(\gamma' \circ F_X^{-1}\right)$ that we denote $L_{\gamma}$ satisfies  $L_{\gamma}=O \left( (1-t)^{-m_L} \right)$ for an $1<m_L< \frac{5}{2}$ and is non decreasing and when $t$ goes to $1^-$.
\item [\textbf{H1)}] The function $\gamma' \circ F_X^{-1}$ is of class $\mathcal{C}^1$ on $]0, 1[$ and its derivative that we denote by $l_{\gamma}$ satisfies $l_{\gamma}(t)=O \left( (1-t)^{-2+\epsilon_{l_{\gamma}}} \right)$ for an $\epsilon_{l_{\gamma}}>0$ when $t$ goes to $1^-$.

\item[\textbf{H2)}] The function $\gamma' \circ F_X^{-1}$ is of class $\mathcal{C}^2$ on $]0, 1[$ and its second derivative that we denote by $L_{\gamma}$ satisfies  $L_{\gamma}(t)=O \left( (1-t)^{-\frac{5}{2}+\epsilon_{L_{\gamma}}} \right)$ for an $\epsilon_{L_{\gamma}}>0$ when $t$ goes to $1^-$.

\begin{remark}
\label{petito}
Assumption \textbf{H1} implies that $X$ is absolutely continuous of density $f_X$ which is continuous and positive and that $\gamma$ is of class $\mathcal{C}^2$. It also implies that $l_{\gamma}=o \left( (1-t)^{-2} \right)$ around 1. Assumption \textbf{H2} implies that $f_X$ is also of class $\mathcal{C}^1$ and $\gamma$ of class $\mathcal{C}^3$. It also implies that $L_{\gamma}(t) = o\left((1-t)^{- \frac{5}{2}} \right)$.
\end{remark}

\end{itemize}

\begin{theorem}
\label{bregasympto}

Let $\frac{1}{2} < \alpha < 1$ and $X$ be a real-valued random variable. Let $(X_1, \dots X_n)$ be an independent sample with the same distribution as $X$.

\begin{itemize}

\item [i)] Under assumption \textbf{H1}, the estimator $\widehat{Q_{\alpha}^{d_{\gamma}}}$ is consistent in probability :

$$ \widehat{Q_{\alpha}^{d_{\gamma}}} \overset{\mathbb{P}}{\underset{n \to + \infty}{\longrightarrow}} Q_{\alpha}^{d\gamma}.$$

\item[ii)] Under assumption \textbf{H2}, the estimator $\widehat{Q_{\alpha}^{d_{\gamma}}}$ is asymptotically normal  :

$$\sqrt{n} \left( \widehat{Q_{\alpha}^{d_{\gamma}}} - Q_{\alpha}^{d \gamma} \right) \underset{n \to \infty}{\Longrightarrow} \mathcal{N}\left(0, \frac{\sigma^{2}_{\gamma}}{ \left(\gamma'' \left(Q_{\alpha}^{d \gamma}(X) \right)\right)^2 (1-\alpha)^2}\right), $$

 where $$\sigma^2_{\gamma} := \int_{\alpha}^1 \int_{\alpha}^1 \frac{(\min(x, y) - xy)}{f_Z(F_Z^{-1}(x))f_Z(F_Z^{-1}(y))} dx dy.$$

and $Z:=\gamma'(X)$.

\end{itemize}

\end{theorem}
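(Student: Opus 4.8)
\medskip
\noindent\textit{Proof plan.}
The strategy is to reduce both assertions to the corresponding statements for the \emph{classical} superquantile through the identity (\ref{link}), and then to transfer the conclusions via the increasing bijection $\gamma'$, which is of class $\mathcal{C}^{2}$ under \textbf{H1} and $\mathcal{C}^{3}$ under \textbf{H2} (see the remark following \textbf{H2}), with continuous inverse. Set $Z:=\gamma'(X)$ and $Z_{i}:=\gamma'(X_{i})$. Since $\gamma'$ is increasing, $Z_{(i)}=\gamma'(X_{(i)})$ for every $i$, hence $\widehat{Q_{\alpha}^{d_{\gamma}}}=\gamma'^{-1}(T_{n})$ with $T_{n}:=\frac{1}{n(1-\alpha)}\sum_{i=\lfloor n\alpha\rfloor+1}^{n}Z_{(i)}$ the empirical superquantile of $(Z_{1},\dots,Z_{n})$, and by (\ref{link}) the target equals $Q_{\alpha}^{d\gamma}=\gamma'^{-1}(Q_{\alpha}(Z))$. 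It is therefore enough to show that $T_{n}\to Q_{\alpha}(Z)$ in probability under \textbf{H1} (then (i) follows by the continuous mapping theorem) and that $\sqrt{n}\,(T_{n}-Q_{\alpha}(Z))$ is asymptotically $\mathcal{N}\!\big(0,\sigma_{\gamma}^{2}/(1-\alpha)^{2}\big)$ under \textbf{H2} (then (ii) follows by the delta method, using $(\gamma'^{-1})'(Q_{\alpha}(Z))=1/\gamma''(\gamma'^{-1}(Q_{\alpha}(Z)))=1/\gamma''(Q_{\alpha}^{d\gamma}(X))$, which produces precisely the variance in the statement).

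The main tool is the representation of $T_{n}$ as an $L$-statistic. Writing $G:=F_{Z}$ and $G^{-1}(t)=\gamma'(F_{X}^{-1}(t))$, one has $(G^{-1})'=l_{\gamma}=1/(f_{Z}\circ F_{Z}^{-1})$ and $(G^{-1})''=L_{\gamma}$; moreover, if $G_{n}^{-1}$ denotes the empirical quantile function of $(Z_{1},\dots,Z_{n})$,
$$T_{n}=\frac{1}{1-\alpha}\int_{\lfloor n\alpha\rfloor/n}^{1}G_{n}^{-1}(t)\,dt,\qquad Q_{\alpha}(Z)=\frac{1}{1-\alpha}\int_{\alpha}^{1}G^{-1}(t)\,dt .$$
For the consistency, the contribution of $[\lfloor n\alpha\rfloor/n,\alpha]$ is $O_{\mathbb{P}}(1/n)$ (an interval of length $<1/n$ on which $G_{n}^{-1}$ is $O_{\mathbb{P}}(1)$ by the consistency of empirical quantiles), while for the remaining part I would use $G_{n}^{-1}=G^{-1}\circ U_{n}^{-1}$, the Glivenko--Cantelli theorem $\sup_{t}|U_{n}^{-1}(t)-t|\to 0$ a.s., and the fact that \textbf{H1} gives $G^{-1}(t)=O((1-t)^{\epsilon_{l_\gamma}-1})$ (integrate the bound on $l_{\gamma}$), which is integrable near $1$ and provides the domination needed to pass to the limit inside the integral; hence $\int_{\alpha}^{1}(G_{n}^{-1}-G^{-1})\to 0$ in probability.

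For the asymptotic normality, I would invoke a weighted approximation of the uniform quantile process: $\beta_{n}(t):=\sqrt{n}\,(U_{n}^{-1}(t)-t)$ converges to a Brownian bridge $B$, with $\sup_{t}|\beta_{n}(t)|/(t(1-t))^{1/2-\eta}=O_{\mathbb{P}}(1)$ for every small $\eta>0$. A second-order Taylor expansion of $G^{-1}$ (legitimate since $G^{-1}\in\mathcal{C}^{2}$ under \textbf{H2}) gives, on an event of probability tending to $1$ on which the intermediate point $\xi_{n,t}$ between $t$ and $U_{n}^{-1}(t)$ stays in a region where $L_{\gamma}(\xi_{n,t})=O(L_{\gamma}(t))$,
$$\sqrt{n}\int_{\alpha}^{1}\bigl(G_{n}^{-1}(t)-G^{-1}(t)\bigr)\,dt=\int_{\alpha}^{1}l_{\gamma}(t)\,\beta_{n}(t)\,dt+\frac{1}{2\sqrt{n}}\int_{\alpha}^{1}L_{\gamma}(\xi_{n,t})\,\beta_{n}(t)^{2}\,dt .$$
Under \textbf{H2} successive integration yields $l_{\gamma}(t)=O((1-t)^{\epsilon_{L_\gamma}-3/2})$ and $G^{-1}(t)=O((1-t)^{\epsilon_{L_\gamma}-1/2})$, so $Z$ has a finite second moment in its upper tail and $\sigma_{\gamma}^{2}<\infty$; combining this with the weighted bound on $\beta_{n}$, the first integral converges in law to the centered Gaussian $\int_{\alpha}^{1}l_{\gamma}(t)B(t)\,dt$, whose variance is $\int_{\alpha}^{1}\int_{\alpha}^{1}l_{\gamma}(s)l_{\gamma}(t)\,(\min(s,t)-st)\,ds\,dt=\sigma_{\gamma}^{2}$ (recall $l_{\gamma}(t)=1/f_{Z}(F_{Z}^{-1}(t))$), while the remainder is $o_{\mathbb{P}}(1)$ and the floor correction is $O_{\mathbb{P}}(1/\sqrt{n})$. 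Dividing by $1-\alpha$ and applying the delta method to $\gamma'^{-1}$ concludes.

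The delicate point — and the reason for the exponents $-2+\epsilon_{l_\gamma}$ and $-\tfrac{5}{2}+\epsilon_{L_\gamma}$ in \textbf{H1}--\textbf{H2} — is the behaviour near $t=1$, where $G^{-1}$, $l_{\gamma}$ and $L_{\gamma}$ all blow up. The standard remedy is to split $[\alpha,1]$ into $[\alpha,1-\delta_{n}]$ and $[1-\delta_{n},1]$ for a suitable rate $\delta_{n}\to 0$, to handle the extreme stretch through an estimate such as $Z_{(n)}=o_{\mathbb{P}}(\sqrt{n})$ (a consequence of the finite upper-tail second moment) together with the integrability of $G^{-1}$, and to verify that the domination integral of the consistency step and the remainder integral of the normality step do converge — which is exactly what the assumed polynomial rates are calibrated for. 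Once these tail estimates are available, dealing with the floor $\lfloor n\alpha\rfloor$ and with the dependence of $\xi_{n,t}$ on the sample is routine.
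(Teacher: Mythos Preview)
Your reduction step --- set $Z=\gamma'(X)$, observe that $\widehat{Q_{\alpha}^{d_{\gamma}}}=\gamma'^{-1}(T_n)$ with $T_n$ the empirical superquantile of the $Z_i$, prove the asymptotics for $T_n$, and then transfer via the continuous mapping theorem for (i) and the delta method for (ii) --- is exactly the paper's route (see the paragraph preceding Proposition~\ref{superasympto}). The difference lies in how you establish the asymptotics for $T_n$ itself, i.e.\ the content of Proposition~\ref{superasympto}. You use the $L$-statistic representation $T_n=\frac{1}{1-\alpha}\int G_n^{-1}$, Glivenko--Cantelli plus dominated convergence for consistency, and for the CLT a second-order Taylor expansion of $G^{-1}$ combined with a weighted strong approximation of the uniform quantile process $\beta_n$ by a Brownian bridge, reading off the limiting variance as $\int_\alpha^1\!\int_\alpha^1 l_\gamma(s)l_\gamma(t)(\min(s,t)-st)\,ds\,dt$. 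The paper instead works directly with the representation $X_{(i)}\stackrel{\mathcal L}=F^{-1}(U_{(i)})$, controls the first-order remainder by explicit Beta-moment computations on the order statistics, and obtains the CLT through the exponential representation $U_{(i)}=\sum_{j\le i}Y_j/\sum_{j\le n+1}Y_j$ together with a Lindeberg--Feller-type result (Proposition~\ref{corlinde}) for weighted sums of centered exponentials. Both approaches require the same tail calibration (the exponents in \textbf{H1}--\textbf{H2}) to make the integrals near $t=1$ finite and the remainder negligible; your Brownian-bridge argument is shorter and more conceptual (and makes the limiting variance appear transparently), while the paper's is more elementary and self-contained, avoiding any appeal to weighted quantile-process approximations. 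Your sketch is sound, though the domination in the consistency step and the control of $L_\gamma(\xi_{n,t})$ in the remainder deserve a line each to be airtight --- precisely the points where the paper spends its effort.
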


\begin{remark}
Easy calculations show that we have the following equalities $$l_{\gamma}:= \frac{\gamma'' \circ F_X^{-1}}{f_X \circ F_X^{-1}},$$ $$L_{\gamma}:= \frac{\left( \gamma''' \circ F_X^{-1}) \times \left( f_X \circ F_X^{-1} \right)- \left( f_{X}' \circ F_X^{-1} \right) \times (\gamma'' \circ F_X^{-1} \right)}{\left(f_X \circ F_X^{-1}\right)^3}$$ and $$f_Z=\frac{f_X \circ \gamma'^{-1}}{\gamma'' \circ \gamma'^{-1}}.$$
\end{remark}

\begin{remark}
The second part of the theorem shows the asymptotic normality of the Bregman superquantile empirical estimator. We can then use the Slutsky's lemma to find confidence intervals. Indeed, since our estimator (\ref{estimateurSQB}) is consistent, we also have 

$$\frac{\sqrt{n}}{\left( \gamma'' \circ \widehat{Q_{\alpha}^{d \gamma}}\right)} \left( \widehat{Q_{\alpha}^{d \gamma}}- Q_{\alpha}^{d \gamma}(X) \right) \Longrightarrow \mathcal{N}\left(0, \frac{\sigma^{2}_{\gamma}}{(1-\alpha)}\right).$$
\end{remark}

To prove Theorem \ref{bregasympto} we use the following results on the asymptotic properties of the superquantile (which is equivalent to deal with the Bregman superquantile when the function $\gamma$ equals to identity), thanks to the link established in Equation (\ref{link}). Asymptotic behaviour of plug-in estimator for general distortion risk measure has already been studied (see for example \cite{lui} for strong consistency and \cite{ref6} for a central limit theorem). Nevertheless, for sake of completeness we propose in this paper a simpler and self-contained proof of these results for the particular case of the superquantile. Further, our proof also allows to exhibit the explicit asymptotic variance which makes the study of Bregman superquantile estimator easier. Indeed, we can then apply these results to the sample $(Z_1, \dots, Z_n)$ where $Z_i:=\gamma'(X_i)$. We conclude by applying the continuous mapping theorem with $\gamma'^{-1}$ which is continuous thanks to assumption \textbf{H1} for the consistency, and by applying the delta-method (see for example \cite{Vander}), with the function $\gamma'^{-1}$ which is differentiable thanks to assumption \textbf{H2} and of positive derivative since $\gamma$ is strictly convex. 

\vspace{0.3cm}

Then, let us consider a real valued random variable $X$ such that $X\mathbf{1}_{X \geq 0}$ is integrable. With the previous notations, if we wish to estimate the classical superquantile $Q_{\alpha}$ we may use the following empirical estimator

\begin{equation}
\label{estimateurSQ}
\begin{aligned}
Q_{\alpha}:= \frac{1}{(1-\alpha)n} \displaystyle\sum_{i= \lfloor n \alpha \rfloor}^n X_{(i)}
\end{aligned}
\end{equation}

\vspace{0.3cm}

For the next proposition we need the two following assumptions.

\begin{itemize}

\item [\textbf{H3)}] The quantile function $F_X^{-1}$ is of class $\mathcal{C}^1$ on $]0, 1[$ and its derivative, denoting $l$, satisfies $l=O((1-t)^{-2+ \epsilon_l})$ for an $\epsilon_l>0$ when $t$ goes to $1^-$.

\item[\textbf{H4)}] The quantile function is $\mathcal{C}^2$ on $]0, 1[$ and its second derivative that we denote $L$ satisfies $L=O \left( (1-t)^{-\frac{5}{2}+ \epsilon_L} \right)$ for an $\epsilon_L >0$ when $t$ goes to $1^-$. 
%\item [\textbf{H3)}] The derivative of the quantile function $F_X^{-1}$ denoting $l$ satisfies $l=o((1-t)^{-2})$ and is non-decreasing when $t$ goes to $1^-$.  
%
%
%\item[\textbf{H4)}] The second derivative of the quantile function that we denote $L$ satisfies $L=O \left( (1-t)^{-m_L} \right)$ for an $1<m_L<\frac{5}{2}$ and is non decreasing when $t$ goes to $1^-$. 

\end{itemize}

\begin{proposition}
\label{superasympto}

Let $\frac{1}{2} < \alpha < 1$, and $X$ be a real-valued random variable. Let $(X_1, \dots X_n)$ be an independent sample with the same distribution as $X$.

\begin{itemize}

\item [i)] Under \textbf{H3}, the empirical estimator (\ref{estimateurSQ}) is consistent in probability

$$ \widehat{Q_{\alpha}} \overset{\mathbb{P}}{\underset{n \to + \infty}{\longrightarrow}} Q_{\alpha}.$$

\item[ii)] Under \textbf{H4}, the  empirical estimator (\ref{estimateurSQ}) is asymptotically Gaussian

$$\sqrt{n} \left( \frac{1}{n(1-\alpha)} \displaystyle\sum_{i= \lfloor n \alpha \rfloor +1}^{n} X_{(i)} - Q_{\alpha}\right) \Longrightarrow \mathcal{N}\left(0, \frac{\sigma^{2}}{(1- \alpha)^2}\right),$$

where $$\sigma^2 := \int_{\alpha}^1 \int_{\alpha}^1 \frac{(\min(x, y) - xy)}{f(F^{-1}(x))f(F^{-1}(y))}.$$

\end{itemize}

\begin{remark}
\label{appelle}
Remark \ref{petito} still holds for $l$ instead of $l_{\gamma}$ and $L$ instead of $L_{\gamma}$. 
\end{remark}

\end{proposition}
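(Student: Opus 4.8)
The plan is to reduce the problem to uniform order statistics and then to split the estimator into a ``bulk'' empirical mean plus a ``boundary'' correction coming from the estimation of the threshold quantile. Since $F_X$ is continuous, $(X_1,\dots ,X_n)$ has the same law as $(F^{-1}(U_1),\dots ,F^{-1}(U_n))$ for $U_1,\dots ,U_n$ i.i.d.\ uniform on $(0,1)$, and since both statements concern only the law of $\widehat{Q_\alpha}$ (convergence in probability to a constant, resp.\ convergence in distribution) we may assume this identity. Write $\Psi:=F^{-1}$, $\hat\xi_n:=U_{(\lfloor n\alpha\rfloor)}$, and let $G_n$ be the empirical c.d.f.\ of $U_1,\dots ,U_n$; then, up to a single summand of order $1/n$ (the discrepancy between the two summation ranges in the statement, negligible even after $\sqrt n$ scaling), $\widehat{Q_\alpha}=\frac1{1-\alpha}\,\frac1n\sum_{i=1}^n\Psi(U_i)\mathbf 1\{U_i>\hat\xi_n\}$ while $Q_\alpha=\frac1{1-\alpha}\int_\alpha^1\Psi(u)\,du$. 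For part (i) I would write $\frac1n\sum_i\Psi(U_i)\mathbf 1\{U_i>\hat\xi_n\}=\frac1n\sum_i\Psi(U_i)\mathbf 1\{U_i>\alpha\}+R_n$. The first term converges a.s.\ to $\int_\alpha^1\Psi$ by the strong law, using $\Psi\mathbf 1_{(\alpha,1)}\in L^1$ (under \textbf{H3}, integrating the bound on $l$ gives $\Psi(u)=O((1-u)^{-1+\epsilon_l})$ near $1$, which is integrable). The remainder $R_n$ is carried by the indices $i$ with $U_i$ between $\hat\xi_n$ and $\alpha$, an interval on which $\Psi$ is eventually bounded and whose relative cardinality is $|G_n(\hat\xi_n)-G_n(\alpha)|=|\lfloor n\alpha\rfloor/n-G_n(\alpha)|$, which tends to $0$ a.s.\ by Glivenko--Cantelli together with $\hat\xi_n\to\alpha$ a.s.; hence $R_n\to 0$ a.s., giving consistency.

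For part (ii) I would push the same decomposition to first order. Set $g(u):=\Psi(u)\mathbf 1\{u\ge\alpha\}$, so the bulk term is $A_n:=\frac1n\sum_i\bigl(g(U_i)-\mathbb E g(U_1)\bigr)$, and the boundary term $B_n:=\frac1n\sum_i\Psi(U_i)\bigl(\mathbf 1\{U_i>\hat\xi_n\}-\mathbf 1\{U_i>\alpha\}\bigr)$ splits as $\Psi(\alpha)\,\frac1n\sum_i\bigl(\mathbf 1\{U_i>\hat\xi_n\}-\mathbf 1\{U_i>\alpha\}\bigr)$ plus a remainder. Because $nG_n(\hat\xi_n)=\lfloor n\alpha\rfloor$ exactly (a.s.), the first part equals $\Psi(\alpha)\bigl(G_n(\alpha)-\lfloor n\alpha\rfloor/n\bigr)=\Psi(\alpha)\bigl(G_n(\alpha)-\alpha\bigr)+O(1/n)$; multiplying by $\sqrt n$ and adding to $\sqrt n A_n$ produces $\frac1{\sqrt n}\sum_i\bigl(h(U_i)-\mathbb E h(U_1)\bigr)$ with $h(u):=\Psi(u)\mathbf 1\{u\ge\alpha\}+\Psi(\alpha)\mathbf 1\{u<\alpha\}=\Psi(\alpha\vee u)$, since $g(U_i)+\Psi(\alpha)\mathbf 1\{U_i<\alpha\}=h(U_i)$ a.s.\ and $G_n(\alpha)-\alpha=\frac1n\sum_i(\mathbf 1\{U_i\le\alpha\}-\alpha)$. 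The remainder of $B_n$ is $\frac1n\sum_i\bigl(\Psi(U_i)-\Psi(\alpha)\bigr)\bigl(\mathbf 1\{U_i>\hat\xi_n\}-\mathbf 1\{U_i>\alpha\}\bigr)$; bounding $|\Psi(U_i)-\Psi(\alpha)|$ on the relevant interval by the $\mathcal C^1$-modulus of $\Psi$ near $\alpha$ times $|\hat\xi_n-\alpha|=O_{\mathbb P}(n^{-1/2})$, and the number of relevant indices by $|nG_n(\alpha)-\lfloor n\alpha\rfloor|=O_{\mathbb P}(n^{1/2})$, makes it $O_{\mathbb P}(n^{-1})$, hence $o_{\mathbb P}(n^{-1/2})$. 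Therefore $\sqrt n\bigl(\widehat{Q_\alpha}-Q_\alpha\bigr)=\frac1{1-\alpha}\,\frac1{\sqrt n}\sum_i\bigl(h(U_i)-\mathbb E h(U_1)\bigr)+o_{\mathbb P}(1)$.

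It then remains to apply the classical central limit theorem to the i.i.d.\ variables $h(U_i)$ and to identify the asymptotic variance. Square-integrability of $h$ follows from \textbf{H4}: integrating the bound on $L$ gives $l(t)=O((1-t)^{-3/2+\epsilon_L})$, hence $\Psi(t)=O((1-t)^{-1+\epsilon_l})$ with $\epsilon_l=\epsilon_L+\tfrac12>\tfrac12$, so $\int_\alpha^1\Psi^2<\infty$ and $\mathbb E h(U_1)^2<\infty$. Hence $\sqrt n(\widehat{Q_\alpha}-Q_\alpha)\Longrightarrow\mathcal N\bigl(0,\mathrm{Var}(h(U_1))/(1-\alpha)^2\bigr)$. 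To compute $\mathrm{Var}(h(U_1))$ I would write $h(u)=\Psi(\alpha)+\int_\alpha^1\Psi'(t)\mathbf 1\{u\ge t\}\,dt$, so that $h(U_1)-\mathbb E h(U_1)=\int_\alpha^1\Psi'(t)\bigl(\mathbf 1\{U_1\ge t\}-(1-t)\bigr)\,dt$; since $\int_\alpha^1|\Psi'(t)|\sqrt{t(1-t)}\,dt<\infty$ under \textbf{H4}, a Fubini argument together with $\mathrm{Cov}\bigl(\mathbf 1\{U_1\ge s\},\mathbf 1\{U_1\ge t\}\bigr)=\min(s,t)-st$ gives $\mathrm{Var}(h(U_1))=\int_\alpha^1\int_\alpha^1(\min(s,t)-st)\,\Psi'(s)\Psi'(t)\,ds\,dt$, which is exactly $\sigma^2$ after the substitution $\Psi'(t)=1/f(F^{-1}(t))$.

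The main obstacle is the analysis of the boundary term $B_n$: one must show that replacing the random threshold $\hat\xi_n$ by the deterministic level $\alpha$ costs precisely the first-order contribution $\Psi(\alpha)(G_n(\alpha)-\alpha)$, with a genuinely $o_{\mathbb P}(n^{-1/2})$ remainder. This contribution is essential, since it is exactly what turns the kernel $\min(s,t)$ that $A_n$ alone would yield into the Brownian-bridge kernel $\min(s,t)-st$ appearing in $\sigma^2$. The argument requires the rate $|\hat\xi_n-\alpha|=O_{\mathbb P}(n^{-1/2})$ and a quantitative bound on the number of sample points falling between $\hat\xi_n$ and $\alpha$; the tail hypotheses \textbf{H3} and \textbf{H4} enter only to secure, respectively, the $L^1$ integrability needed for the law of large numbers and the $L^2$ integrability near $1$ (with the accompanying Fubini bound on $\Psi'$) needed for the central limit theorem and for the finiteness of the announced variance.
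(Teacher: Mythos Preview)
Your argument is correct and proves the proposition, but it follows a genuinely different route from the paper.

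The paper works directly with the order statistics: it writes $X_{(i)}=F^{-1}(U_{(i)})$, Taylor-expands $F^{-1}$ at the deterministic points $i/(n+1)$, and controls the first- and second-order terms separately. For consistency this leads to delicate estimates on $|U_{(i)}-i/(n+1)|$ weighted by $l$, with a special treatment of the extreme indices $i=n,n-1,n-2$ and a splitting according to whether the intermediate point lies in a compact set or in a neighbourhood of~$1$. For the central limit theorem the paper uses the exponential representation $U_{(i)}=\sum_{j\le i}Y_j/\sum_{j\le n+1}Y_j$, rewrites the linearised statistic as a triangular array $\frac{1}{n+1}\sum_j \alpha_{j,n}(Y_j-1)$, verifies the Lindeberg--Feller condition of Proposition~\ref{corlinde}, and identifies the limiting variance through a lengthy Riemann-sum argument involving Beta-function identities.

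Your decomposition avoids all of this. By writing the estimator as $\frac{1}{n}\sum_i\Psi(U_i)\mathbf 1\{U_i>\hat\xi_n\}$ with $\hat\xi_n=U_{(\lfloor n\alpha\rfloor)}$ and comparing to the same sum with the fixed threshold $\alpha$, you reduce the problem to an i.i.d.\ average of $h(U_i)=\Psi(\alpha\vee U_i)$ plus a boundary remainder that you bound via the rates $\hat\xi_n-\alpha=O_{\mathbb P}(n^{-1/2})$ and $nG_n(\alpha)-\lfloor n\alpha\rfloor=O_{\mathbb P}(n^{1/2})$. The classical CLT then applies directly, and your Fubini computation of $\operatorname{Var}(h(U_1))$ via $h(u)=\Psi(\alpha)+\int_\alpha^1\Psi'(t)\mathbf 1\{u\ge t\}\,dt$ yields the Brownian-bridge kernel $\min(s,t)-st$ in one line. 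This is shorter, uses only elementary tools (strong law, Glivenko--Cantelli, the CLT for sample quantiles, and the i.i.d.\ CLT), and in fact consumes less of \textbf{H3}/\textbf{H4} than the paper does: you only need $F^{-1}\in L^1(\alpha,1)$ for~(i) and $F^{-1}\in L^2(\alpha,1)$ with $\int_\alpha^1|l(t)|\sqrt{1-t}\,dt<\infty$ for~(ii). The paper's approach, on the other hand, is closer to the general theory of $L$-statistics and would adapt more readily to weight patterns that are not pure indicators.
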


\subsection{Examples of asymptotic behaviors for the classical superquantile}\label{sec:examples}

Our assumptions are easy to check in practice. Let us show some examples of the asymptotic behaviour of the estimateur of the superquantile (\ref{estimateurSQ}) by using the exponential distribution of parameter 1 and the Pareto distribution.

\subsubsection{Exponential distribution}

In this case, we have on $\mathbb{R}^{+}_{*} \, \,  f(t)= \exp(-t)$, 

$F(t)=1-  \exp(-x)$. Then $F^{-1}(t)= - \ln(1-t)$. 

\begin{itemize}
\item Consistency :

$l(t)= (1-t)^{-1} = O\left((1-t)^{-2+ \frac{1}{2}} \right)$  (when $t \mapsto 1^-$) so that the estimator (\ref{estimateurSQ}) is consistent.

\item Asymptotic normality :

$L(t)= (1-t)^{-2} = O \left((1-t)^{-\frac{5}{2}+ \frac{1}{3}} \right)$ (when $t \mapsto 1^-$). So the estimator (\ref{estimateurSQ}) is asymptotically Gaussian.
\end{itemize}

\subsubsection{Pareto distribution}

Here, we consider the Pareto distribution of parameter $a>0$ : on $\mathbb{R}^{+}_{*}$, 

$F(t)= 1 - x^{-a}$, $f(t)= a x^{-a-1}$, and $F^{-1}(t) = (1-t)^{\frac{-1}{a}}$. 

\begin{itemize}

\item Consistency :

$l(t)=(a (1-t)^{-1 - \frac{1}{a}})$ thus,  $l(t)=O\left( (1-t)^{-2+ \epsilon_l} \right)$ (when $t \mapsto 1^-$) as soon as $a>1$ (for exemple for $\epsilon_l= \frac{1- \frac{1}{a}}{2}$). The consistency for estimator (\ref{estimateurSQ}) is true.

\item Asymptotic normality :

$L(t) = C(a) (1-x)^{-\frac{1}{a}-2}$ thus, as soon as $a>2$, $L(t)=  O \left( (1-t)^{- \frac{5}{2}+ \epsilon_L} \right)$ (for example $\epsilon_L= \frac{\frac{1}{2}- \frac{1}{a}}{2}$), when $t \mapsto 1^-$. The estimator (\ref{estimateurSQ}) is asymptotically gaussian if and only if $a>2$.

\end{itemize}

\subsection{Examples of asymptotic behaviour of the Bregman superquantile}

Let us now study the same examples in the case of the Bregman superquantile and its empirical estimator (\ref{estimateurSQB}). For the exponential distribution, the conclusion is the same as in the previous case of classical superquantile. However, for the Pareto distribution, we can find a function $\gamma$ such that the estimator of the Bregman superquantile is asymptotically normal without any condition on the exponent $a$ involved in the distribution. So, the Bregman superquantile is more interesting than the superquantile in this example.

\subsubsection{Exponential distribution}

Let us show the example of the exponential distribution (for $X$) and the harmonic Bregman function (for $\gamma$). We have $\gamma'(x) = (x-1)x^{-1}$ and $F^{-1}_X(t)=- \ln(1-t)$. So that, denoting $Z= \gamma'(X)$ as in Theorem \ref{bregasympto}, $$F_Z^{-1}(t) = 1 + \frac{1}{\ln(1-t)},$$

\begin{itemize}
\item Consistency. In this case, we have, $$l_{\gamma}(t)= \frac{1}{(1-t)(\ln(1-t))^2}. $$ 
So,  $l_{\gamma}$ is a $O \left( (1-t)^{-2+ \frac{1}{2}} \right)$ (when $t \mapsto 1^-$). The estimator (\ref{estimateurSQB}) is consistent.

\item Asymptotical normality.

$$L_{\gamma}(t) = \frac{(\ln(1-t))^2 + 2 \ln(1-t)}{(1-t)^2(\ln(1-t))^4},$$
Then  $L_{\gamma}$  is $O \left( (1-t)^{-\frac{5}{2}+ \frac{1}{3}}\right)$ (when $t \mapsto 1^-$). The estimator (\ref{estimateurSQB}) is asymptotically Gaussian.
\end{itemize}

\subsubsection{Pareto distribution}

Let us now study the case of the Pareto distribution with the geometric Bregman function. We have $F^{-1}_X(t)=(1-t)^{\frac{-1}{a}}$ and $\gamma'(t)=\ln(t)$. Then $$F_Z^{-1}(t)=- \frac{1}{a} \ln(1-t).$$

\begin{itemize}

\item Consistency.

$$l_{\gamma}(t)= \frac{1}{a} \frac{1}{1-t} = O \left(\frac{1}{(1-t)^{2- \frac{1}{2}}} \right),$$

and the monotonicity is true. The estimator (\ref{estimateurSQB}) is consistent.

\item Asymptotic normality.

$$L_{\gamma}(t) = \frac{1}{a} \frac{1}{(1-t)^2}= O \left( (1-t)^{-\frac{5}{2}+ \frac{1}{3}}\right).$$

The estimator (\ref{estimateurSQB}) is consistent and asymptotically gaussian for every $a > 0$.

\end{itemize}

See next part for an illustration of these results by simulations and a summary.

\section{Numerical simulations}

In our numerical tests we simulate values from a known theoretical distribution and computing the $0.95$-quantiles and superquantiles.
For each estimated quantity, the reference value is given via a $10^6$-size random sample and a convergence study is performed from a $1
000$-size sample to a $10^5$ size sample (with a step of $500$).
In order to annihilate the effect of randomness, $50$ repetitions of each numerical experiment are made. Then, we compute
\begin{itemize}
\item The mean value of the $50$ estimations to be compared to the reference value,
\item The standard deviation of the $50$ estimations. It allows to compute an experimental $95\%$-confidence interval (CI) to be compared to the theoretical $95\%$-CI (given by the central limit theorem).
\end{itemize}
Each is composed of four plots of convergence for the following quantities: quantile (up left), classical superquantile (up right), geometrical superquantile (bottom left) and harmonic superquantile (bottom right).
Each superquantile convergence plot is composed of the following curves: Reference value (dotted black line), mean estimated values (red circles), theoretical $95\%$-CI (dashed black line) and experimental $95\%$-CI (solid blue line).
 
Figure \ref{fig:SQexp1} gives the results for an exponential distribution of parameter $\lambda=1$.
As predicted by the theory (see Section \ref{sec:examples}), for the three different superquantiles, the consistency is verified while the experimental CI perfectly fits the theoretical CI (given by  the central limit theorem).

\begin{figure}[!ht]
$$\includegraphics[height=8cm]{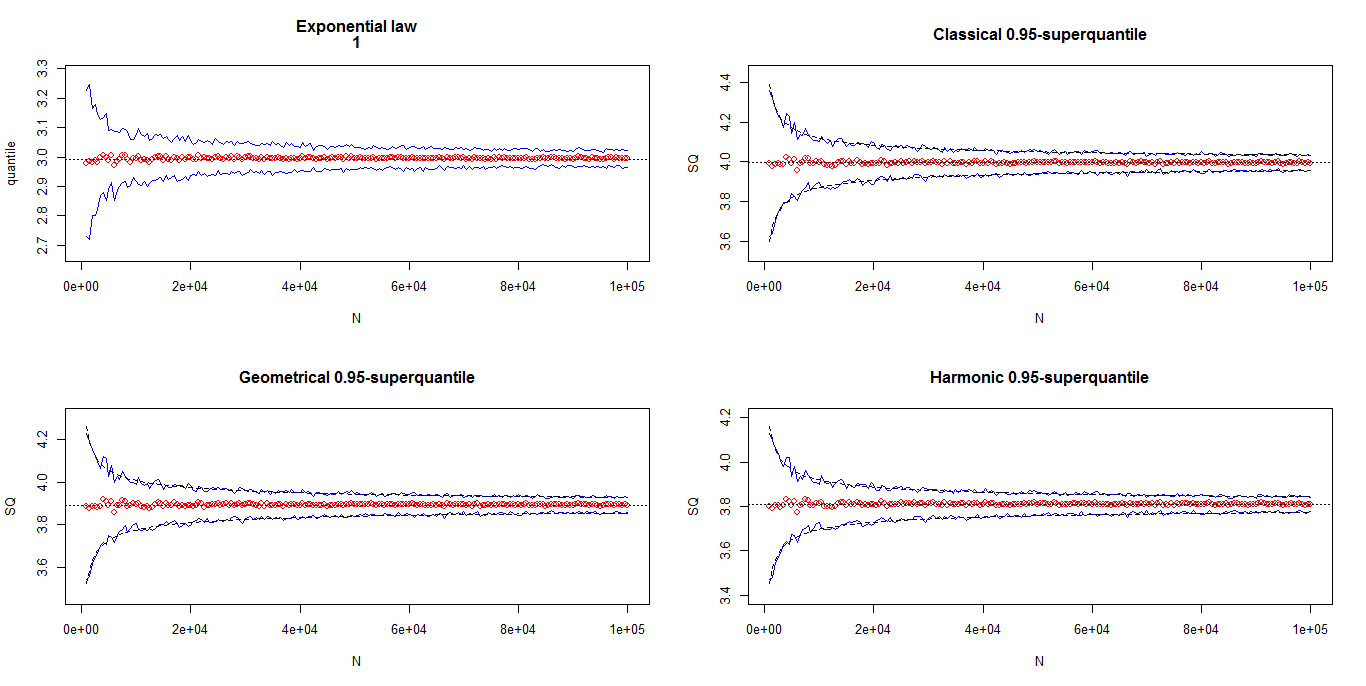}$$
\caption{Numerical convergence test for the exponential distribution.}\label{fig:SQexp1}
\end{figure}

We then test the Pareto distribution (see Section \ref{sec:examples}) with three different shape parameters: $a=0.5$, $a=1.5$ and $a=2.5$.
Figures \ref{fig:SQpareto0.5} ($a=0.5$), \ref{fig:SQpareto1.5} ($a=1.5$) and \ref{fig:SQpareto2.5}  ($a=2.5$) give the convergence results.
For the geometrical and harmonic superquantiles, as predicted by the theory (see Section \ref{sec:examples}), the consistency of the Monte Carlo estimation is verified while the experimental CI perfectly fits the theoretical CI (asymptotic normality).
For the classical superquantile, we distinguish three different behaviors: 
\begin{itemize}
\item No consistency for $a=0.5$ (Figure \ref{fig:SQpareto0.5}) (theory predicts consistency only if $a>1$),
\item Consistency but no asymptotic normality for $a=1.5$ (Figure \ref{fig:SQpareto1.5}) (theory predicts asymptotic normality only if $a>2$),
\item Consistency and asymptotic normality for $a=2.5$ (Figure \ref{fig:SQpareto2.5}),
\end{itemize}

\begin{figure}[!ht]
$$\includegraphics[height=8cm]{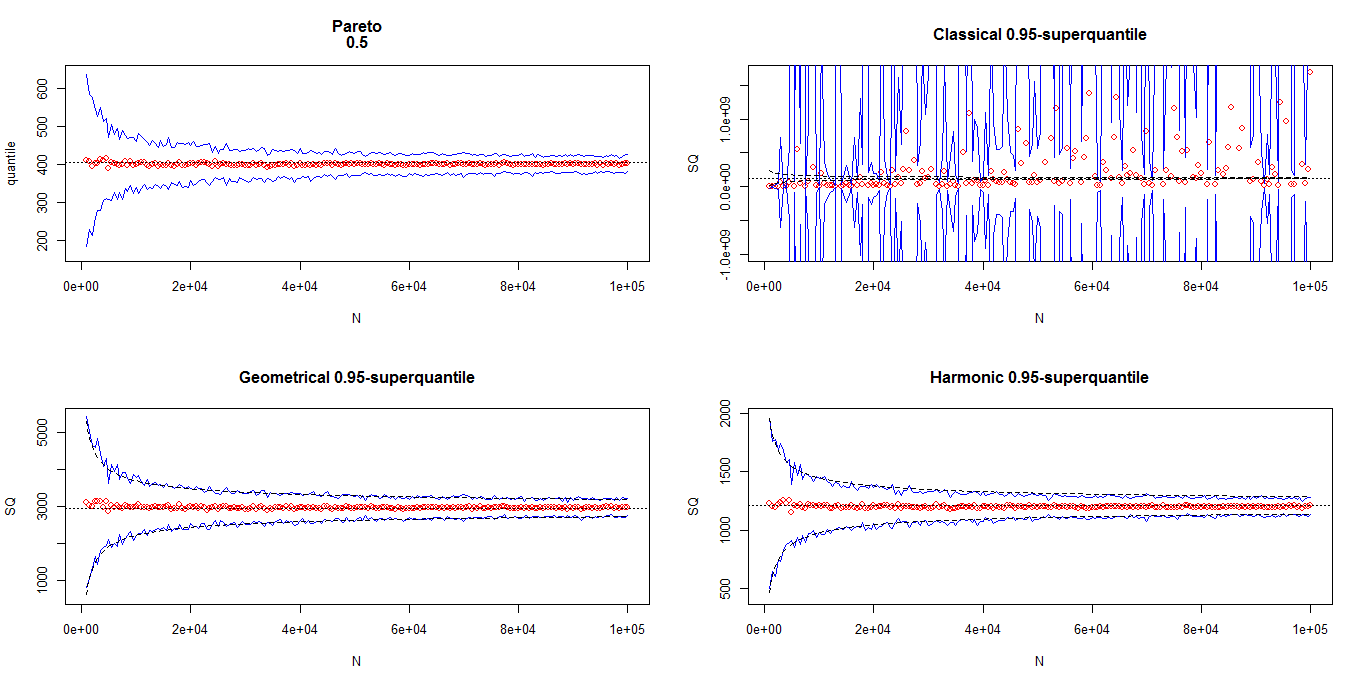}$$
\caption{Numerical convergence test for the Pareto distribution ($a=0.5$).}\label{fig:SQpareto0.5}
\end{figure}

\begin{figure}[!ht]
$$\includegraphics[height=8cm]{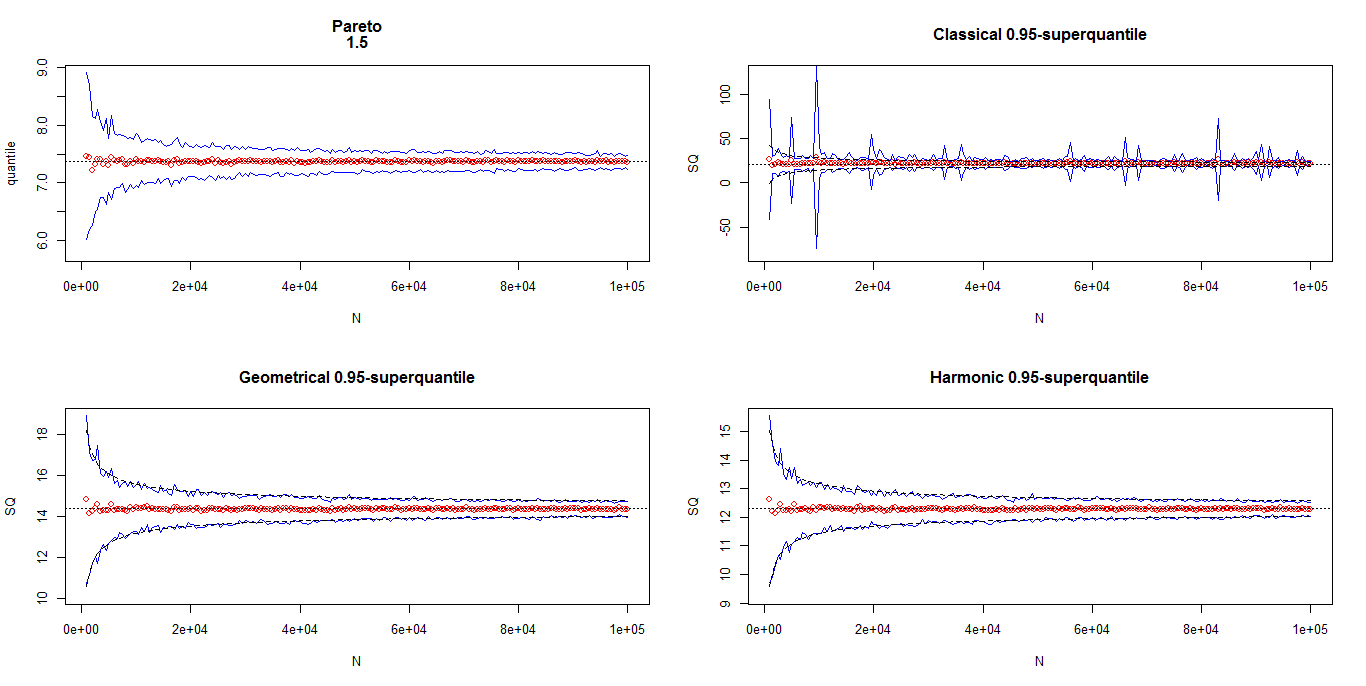}$$
\caption{Numerical convergence test for the Pareto distribution ($a=1.5$).}\label{fig:SQpareto1.5}
\end{figure}

\begin{figure}[!ht]
$$\includegraphics[height=7cm]{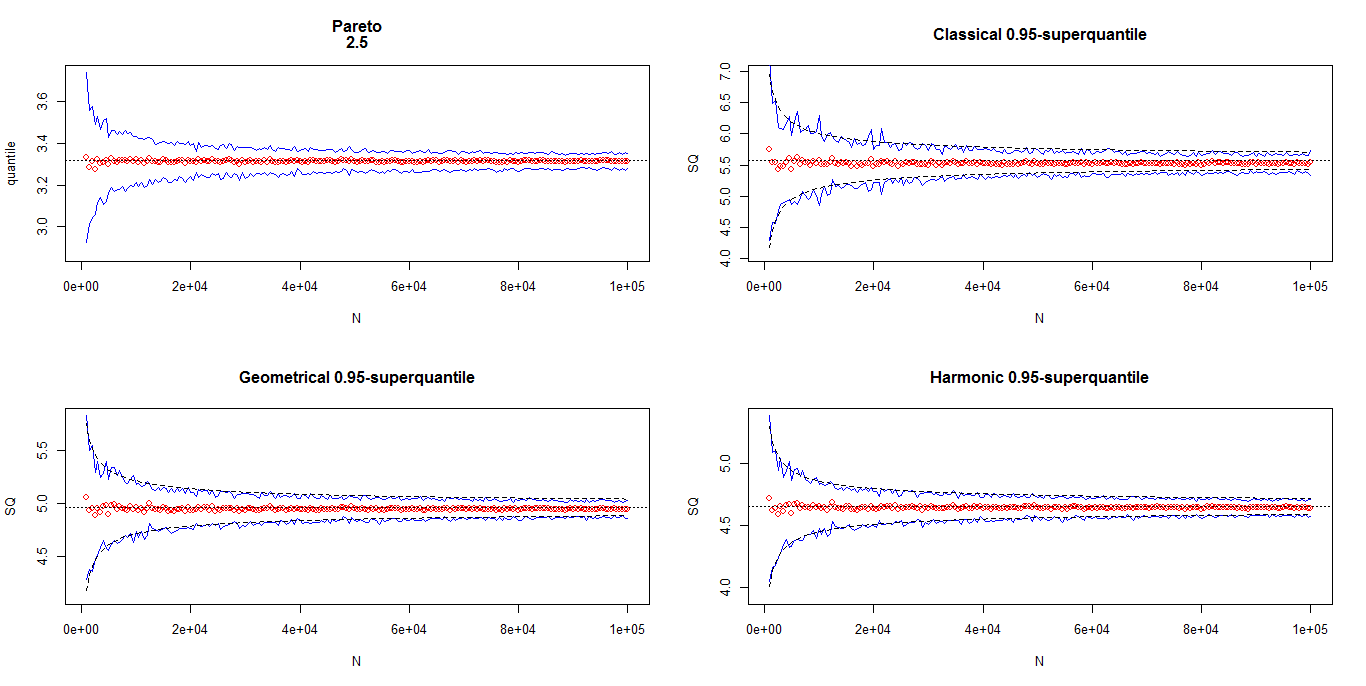}$$
\caption{Numerical convergence test for the Pareto distribution ($a=2.5$).}\label{fig:SQpareto2.5}
\end{figure}

To sum up the two previous parts, the plug in estimators (\ref{estimateurSQ}) and (\ref{estimateurSQB}) seem to have the same asymptotic behaviour when considering distribution with not too heavy tail-distribution, like the exponential distribution. Nevertheless, the estimator of the Bregman superquantile (\ref{estimateurSQB}) has better statistical properties when we deal with heavy tail-distribution. A typical example is the Pareto distribution. Indeed, with Parato distribution of parameter $a>2$, the tail is not too heavy and the both estimator have good asymptotic properties. This not the case any more when we choose parameter $a<2$. When $1<a<2$, the estimator of the classical superquantile (\ref{estimateurSQ}) is no more asymptotically gaussian and when $0<a<1$ it is even not consistent, whereas the estimator for Bregman superquantile (\ref{estimateurSQB}) keeps good asymptotic properties in each case. 

\section{Applications to a nuclear safety exercise}

GASCON is a software developed by CEA (French Atomic Energy Commission) to study the potential chronological
atmospheric releases and dosimetric impact for nuclear facilities safety assessment \cite{ioovan06}.
It evaluates, from a fictitious radioactive release,
the doses received by a population exposed to the cloud of radionuclides and through the food chains. It takes into account the interactions that exist between humans, plants and animals, the different pathways of transfer (wind, rain, \ldots), the distance between emission and observation, and the time from emission.

As GASCON is relatively costly in computational time, in \cite{ioovan06}, the authors have built metamodels (of polynomial form) of GASCON outputs in order to perform uncertainty and sensitivity analysis.
As in \cite{jaclav06}, we focus on one output of GASCON, the annual effective dose in $^{129}$I received in one year by an adult who lives in the neighborhood of a particular nuclear facility. 
Instead of the GASCON software, we will use here the metamodel of this output which depends on $10$ input variables, each one modelled by a log-uniform random variable (bounds are defined in \cite{ioovan06}).
The range of the model output stands on several decades ($10^{-14}$ to $10^{-11}$ Sv/year) as shown by Figure \ref{fig:gascon-hist} which represents the histogram (in logarithmic scale) of $10^6$ simulated values.

\begin{figure}[!ht]
$$\includegraphics[height=7cm]{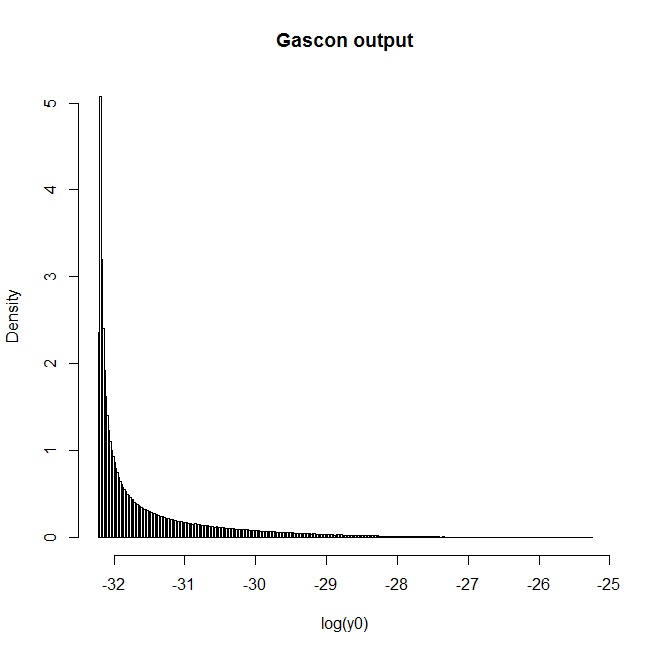}$$
\caption{Distribution of the GASCON output variable.}\label{fig:gascon-hist}
\end{figure}

For this kind of numerical simulation exercises, we can be typically interested by safety criteria as $95\%$-quantile and its associated superquantiles.
The idea is to compare these values to regulatory limits or to results coming from other scenarios or from other tools.
In practice, the number of simulations performed with the GASCON model is several hundreds.
Table \ref{tab:gascon-SQ} gives the estimated values of the quantile and superquantiles for $1000$ metamodel simulations.
Figure \ref{fig:gascon-SQ} shows the relative errors (computed by averaging $1000$ different estimations) which are made when estimating the superquantiles using $3$ different Bregman divergences and with different sampling sizes.
We observe that geometrical and harmonic superquantiles are clearly more precise than the classical one. Using such measures is therefore more relevant when performing comparisons.

\begin{table}[!ht]
\caption{Estimated values of $95\%$-quantile and $95\%$-superquantiles for $1000$ simulations.}\label{tab:gascon-SQ}
\begin{tabular}{c|c|c|c}
Quantile & Classical & Geometrical & Harmonic \\
 & superquantile & superquantile & superquantile\\
\hline
$1.304\times 10^{-13}$ & $4.769\times 10^{-13}$ & $3.316\times 10^{-13}$ & $2.637\times 10^{-13}$\\
\end{tabular}
\end{table}

\begin{figure}[!ht]
$$\includegraphics[height=7cm]{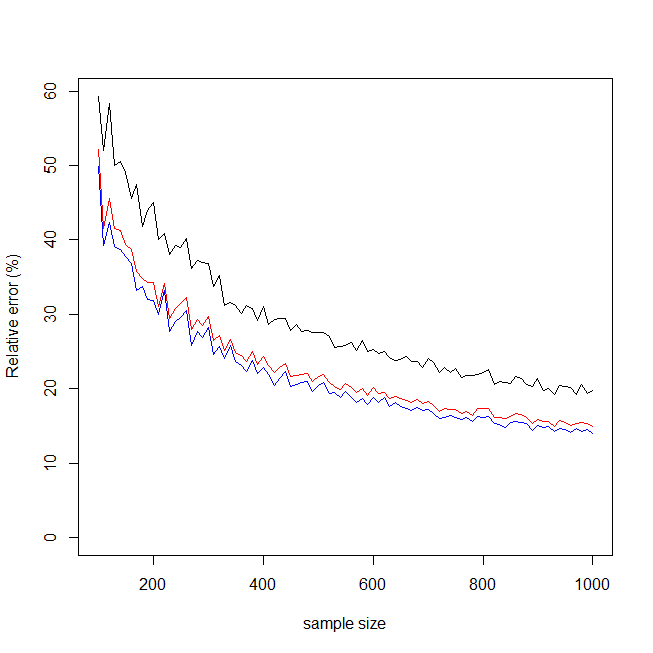}$$
\caption{Evolution of the relative errors (mean square error divided by the reference value) on the estimated superquantiles  in function of the sample size.
In black: classical superquantile; in red: geometrical superquantile; in blue: harmonic superquantile.
The reference value has been calculated with $10^7$ simulations.}\label{fig:gascon-SQ}
\end{figure}

\section{Proofs}

\subsection{Proof of the Proposition \ref{coherence} : coherence of the Bregman superquantile}

\begin{proof}

\textbf{Proof of i) :}

First we obviously have $ Q_{\alpha}^{d\gamma}(C) = \gamma'^{-1}( \gamma'(C))=C$. 

The monotonicity property is well-known for the superquantile (see for example \cite{archiv}, \cite{croissance} or \cite{On}). Then, \eqref{link} and the monotonicity of $\gamma'^{-1}$ and $\gamma'$ (since $\gamma$ is strictly convex, its derivative $\gamma'$ is strictly non-decreasing and so is its inverse function $\gamma'^{-1}$) allow us to conclude. 

%In \cite{croissance} it is shown that the Superquantile of the law of $X$ satisfies 
%
%$$Q_{\alpha}= \displaystyle\min_{a \in \mathbb{R}} \left\{ a + \frac{1}{1-\alpha} \mathbb{E} \left(\left[X-a \right]^+ \right)\right\} = q_{\alpha} + \frac{1}{1-\alpha} \mathbb{E} \left(\left[X-q_{\alpha} \right]^+ \right).$$ 
%
%Let $X$ and $X'$ be two random variables such that $X \leq X'$ (a.s.). Using the previous results we have 
%
%$$Q_{\alpha}(X) \leq q_{\alpha}(X') + \frac{1}{1-\alpha} \mathbb{E} \left(\left[X-q_{\alpha}(X') \right]^+ \right).$$
%
%Then the monotony of the function $x \mapsto (x-c)^+$ when $c$ is fixed gives
%
%$$Q_{\alpha}(X) \leq q_{\alpha}(X') + \frac{1}{1-\alpha} \mathbb{E} \left(\left[X'-q_{\alpha}(X') \right]^+ \right)= Q_{\alpha}(X').$$

\vspace{0.3cm}

\textbf{Proof of ii) :}

Let us reformulate the problem. For every (measurable) function $f$ and for every random variable $X$, we denote $$\Ea[f(X)] = \BBe[f(X)|X\geq F_X^{-1}(\alpha)].$$ Let $X$ and $X'$ be two real-valued random variable.
The Bregman superquantile of $X$ associated to $\gamma$ is 
\[\gamma'^{-1}\left(\Ea[\gamma'( X)]\right)\,.\]
According to Definition \ref{deco}, the Bregman superquantile associated $\gamma$ is \emph{homogeneous} if, for every random variable $X$ and every $\lambda>0$,

\begin{equation}
\label{preuveaurelien}
\begin{aligned}
\gamma'^{-1}\left(\Ea[\gamma'(\lambda X)]\right) &= \lambda \gamma'^{-1}\left(\Ea[\gamma'(X)]\right)\\
\end{aligned}
\end{equation}

As $\gamma'$ and $x\mapsto (\gamma'(x)-\gamma'(1))/\gamma''(1)$ yield the same superquantiles, one may assume without loss of generality that $\gamma'(1)=0$ and that $\gamma''(1) = 1$

First, it is easy to check that the condition given is sufficient.  For simplicity, we write $\phi = \gamma'$. Let us show that (\ref{preuveaurelien}) holds for each possible form of $\Phi$ given in the proposition. 
If $\phi(x) = (x^\beta-1)/\beta$, then $\phi^{-1}(y) =  (1+\beta y)^{1/\beta}$ and 
\begin{align*}
 \phi^{-1}\left(\Ea[\phi(\lambda X)]\right) &=\left(1+\beta \Ea\left[\frac{(\lambda X)^\beta -1}{\beta}\right]\right)^{\frac{1}{\beta}}\\
 &= \lambda \left(\Ea\left[X^\beta\right] \right)^{1/\beta}\\
 &= \lambda\left(1+\beta \Ea\left[\frac{X^\beta -1}{\beta}\right]\right)^{\frac{1}{\beta}}\\
 &= \lambda  \phi^{-1}\left(\Ea[\phi( X)]\right) \,.
\end{align*}

If $\phi(x)= \ln(x)$, then $\phi^{-1}(y)=\exp(y)$ and 

\begin{align*}
 \phi^{-1}\left(\Ea[\phi(\lambda X)]\right) &=\exp\left(\Ea\left( \ln( \lambda X) \right) \right)\\
 &= \exp\left(\Ea\left( \ln( \lambda) \right) + \Ea\left( \ln( X) \right) \right)\\
 &= \lambda \exp \left( \Ea \left( \ln(X) \right) \right)\\
 &= \lambda  \phi^{-1}\left(\Ea[\phi( X)]\right) \,.
\end{align*}

Let us now show that the condition on $\Phi$ is necessary for (\ref{preuveaurelien}) holds. Let $y>0$. Let $\gamma$ be a strictly convex function such that its associated Bregman superquantile is positively homogeneous : for avery random variable $X$ and for every $\lambda >0$, \ref{preuveaurelien} holds. In particular, let $Y$ be a random variable with distribution $\P$ such that, denoting $a = y\wedge 1$, $\P(du) = \alpha a ^{-1}1_{[0,a]}(u)du + (1-\alpha)p\delta_y + (1-\alpha)(1-p) \delta_1$. Its quantile of order $\alpha$ is $F_Y^{-1}(\alpha) = a$. The conditional distribution of $Y$ given $Y\geq F_Y^{-1}(\alpha)$ is $(1-p)\delta_1 + p\delta_y$, and $\Ea[\phi(Y)] = (1-p)\phi(1) + p \phi(y)$. 
The positive homogeneity property (\ref{preuveaurelien}) and the assumption $\phi(1)=0$ imply that
\[\phi^{-1}\left((1-p) \phi(\lambda) + p\phi(\lambda y)\right) = \lambda \phi^{-1}\left( p\phi(y)\right)\,.\]
By assumption, the expressions on both sides are smooth in $p$ and $y$. Taking the derivative in $p$ at $p=0$ yields \[\frac{\phi(\lambda y) -\phi(\lambda)}{\phi'(\lambda)} = \lambda \frac{\phi(y)}{\phi'(1)}\,,\] 
and hence, as $\phi'(1)=1$,
\[\phi(\lambda y) -\phi(\lambda) = \lambda \phi'(\lambda) \phi(y)\,.\]
By differentiating with respect to $y$, one gets
\begin{equation}\phi'(\lambda y) = \phi'(\lambda)\phi'(y)\,.\label{eq:phi}\end{equation}

Let $\psi$ be defined on $\BBr$ by $\psi(z) = \ln \left(\phi'(\exp( z))\right)$. 
One readily checks that Equation~\eqref{eq:phi} yields
\[\psi\left(\ln(y) + \ln(\lambda)\right) = \psi\left(\ln(y)\right) + \psi\left(\ln(\lambda)\right)\,. \]
This equation holds for every $y,\lambda>0$. This is well known to imply the linearity of $\psi$: there exists a real number $\beta$ (which will be positive since $\gamma$ is convex) such that for all $z\in\BBr$,
\[\ln\left(\phi'(\exp(z))\right) = \psi(z) = \beta z\,.\]
Thus, $\phi'(\exp(z)) = \exp(\beta z)$, that is 
$\phi'(y) = y^{\beta}$ for all $y>0$. 
For $\beta = -1$, one obtains $\phi(y) = \ln(y)$. Otherwise, taking into account the constraint $\phi(1)=0$, this yields
\[\phi(y) = \frac{y^{1+\beta}-1}{1+\beta}\,.\]

\vspace{0.3cm}

\textbf{Proof of iii) : }

Let $X$ and $X'$ be two real-valued random variable. Still denoting, $\mathcal{R}(x)=Q_{\alpha}^{d \gamma}(X)$, we want to show that

$$\mathcal{R}(X+X') \leq \mathcal{R}(X)+ \mathcal{R}(X').$$

Since $\gamma$ is convex,  $\gamma'$ is non-decreasing and this is the same thing to show that

$$\gamma'\left(\frac{\mathcal{R}(X + X')}{2}\right) \leq \gamma'\left(\frac{\mathcal{R}(X) + \mathcal{R}(X')}{2} \right).$$ We set $S:=X+X'$. Using the concavity of $\gamma$, we have 

\begin{alignat*}{1}
\gamma' \left(\frac{\mathcal{R}(X) + \mathcal{R}(X')}{2} \right) & \leq \frac{1}{2} \left[ \gamma' \Big(\mathcal{R}(X) \Big) + \gamma' \left(\mathcal{R}(X') \right) \right] 
\\
& = \frac{1}{2(1-\alpha)} \mathbb{E} \left(\gamma'(X) \mathbf{1}_{X \geq q_{\alpha}^{X}} + \gamma'(X') \mathbf{1}_{X' \geq q_{\alpha}^{X'}} \right)\\
\end{alignat*}

where we still denote $\mathcal{R}(V)=Q_{\alpha}^{d\gamma}(V)$ when $V$ is a random variable. 
But

$$ \gamma' \left(\frac{\mathcal{R}(S)}{2} \right)= \frac{1}{2(1-\alpha)} \mathbb{E} \left(\gamma'(S) \mathbf{1}_{S \geq q_{\alpha}^{S}}\right). $$

So we want to show that 

\[\mathbb{E} \left(\gamma'(X)  \mathbf{1}_{X \geq q_{\alpha}^{X}} + \gamma'(X')  \mathbf{1}_{X' \geq q_{\alpha}^{X'}} - \gamma'(S)  \mathbf{1}_{S \geq q_{\alpha}^{S}} \right) \geq 0.\]

The sud-additivity hypothesis on $\gamma$ allows us to use the same argument as in \cite{coherent} for the classical superquantile :

\begin{alignat*}{1}
 & \mathbb{E} \Big(\gamma'(X)  \mathbf{1}_{X \geq q_{\alpha}^{X}} + \gamma'(X')  \mathbf{1}_{X' \geq q_{\alpha}^{X'}} - \gamma'(S)  \mathbf{1}_{S \geq q_{\alpha}^{S}} \Big)\\
& \geq \mathbb{E} \Big(\gamma'(X)  \mathbf{1}_{X \geq q_{\alpha}^{X}} + \gamma'(X')  \mathbf{1}_{X' \geq q_{\alpha}^{X'}} - \gamma'(X)  \mathbf{1}_{S \geq q_{\alpha}^{S}} - \gamma'(X')  \mathbf{1}_{S \geq q_{\alpha}^{S}} \Big)\\
& \geq \gamma'(q_{\alpha}^{X}) \mathbb{E} \Big(\mathbf{1}_{X \geq q_{\alpha}^{X}} - \mathbf{1}_{S \geq q_{\alpha}^{S}} \Big) + \gamma'(q_{\alpha}^{X'} ) \mathbb{E} \Big(\mathbf{1}_{X' \geq q_{\alpha}^{X'}} - \mathbf{1}_{S \geq q_{\alpha}^{S}} \Big) \\
& = 0. \\
\end{alignat*}

\vspace{0.3cm}

Finally, we show the closeness under the same assumption as just before.  Let be $(X_{h})_{h>0}$ satisfying the hypothesis. By subadditivity we have

$$\mathcal{R}(X) \leq \mathcal{R}(X_h) + \mathcal{R}(X_{h}-X) \leq 0 + \mathcal{R}(X_{h}-X).$$

Then denoting $Y_h=X_h-X$, it is enough to show that $$Y_{h} \underset{L^{2}, \, n \to + \infty}{\longrightarrow} 0 \Longrightarrow \mathcal{R}(Y_{h}) \underset{n \to + \infty}{\longrightarrow} 0 $$ to conclude. Thanks to the concavity of $\gamma'$ we can use Jensen inequality for conditional expectation

$$ \gamma'^{-1} \left[\mathbb{E} \Big( \gamma'(Y_{h}) | Y_{h}\geq F_{Y_h}^{-1} (\alpha) \Big) \right] \leq \mathbb{E}\left(Y_{h} |Y_{h} \geq F_{Y_{h}}^{-1}(\alpha)\right).$$

We conclude with Cauchy-Schwartz inequality 

$$\frac{\mathbb{E}\left(Y_{h} \mathbf{1}_{Y_{h} \geq F_{Y_{h}}^{-1}(\alpha)}\right)}{1-\alpha} \leq \mathbb{E}((Y_{h})^{2})^{\frac{1}{2}} \frac{\mathbb{E} \left(\mathbf{1}^2_{Y_{h} \geq F_{Y_{h}^{-1}(\alpha)} }\right)^{\frac{1}{2}}}{1- \alpha}= ||Y_{h}||^{2} \sqrt{1-\alpha}  \underset{h \to 0}{\longrightarrow} 0.$$

\end{proof}

\subsection{Proof of Proposition \ref{superasympto} : asymptotic behavior of the plug-in estimator of the superquantile}

\subsection{Mathematical tools}

We first give some technical or classical results that we will use in the forthcoming proofs.

\subsubsection{Ordered statistics and Beta function}
\label{repres}

Let us recall some results on ordered statistics (see \cite{order}).
First of all, let $(U_{i})_{i=1 \dots n}$ be an independant sample from the uniform distribution on $[0, 1]$. Then, 

\begin{equation}
\begin{aligned}
\label{consistanceuniforme}
U_{(n)} \underset{a.s}{\longrightarrow}1.
\end{aligned}
\end{equation}

and

\begin{equation}
\begin{aligned}
\label{TCLuniforme}
n \left(1-U_{(n)} \right) \underset{\mathcal{L}}{\longrightarrow}W
\end{aligned}
\end{equation}

where $W$ has an exponential distribution of parameter 1.

Let now $(Y_{i})_{i=1 \dots n+1}$ be an independent sample from the standard exponential distribution. It's well known that 

\begin{equation}
\label{representation}
U_{(i)}:= \displaystyle\sum_{j=1}^{i}Y_{j}\left(\displaystyle\sum_{j=1}^{n+1}Y_{j}\right)^{-1}
\end{equation}

has the same distribution as the $i^{th}$ ordered statistics  of an i.i.d sample of size $n$ uniformly distributed on $[0, 1]$, that is Beta distribution of parameters $i$ and $n-i+1$ denoted $\mathcal{B}(i, n-i+1)$. 

It is also known that when $(X_i)_{1\leq i \leq n}$ is a sample of cumulative distribution $F_X$, this equality in law holds

\begin{equation}
\label{loi}
X_{(i)} \stackrel{\mathcal{L}}{=}  F^{-1}_X(U_{(i)}).
\end{equation}

Recall that $\mathcal{B}(a, b)$ distribution has the following density

$$f_{\mathcal{B}(a, b)}(x) = \frac{x^{a-1}(1-x)^{b-1}}{B(\alpha, \beta)}\mathbf{1}_{x \in [0, 1]}, \, \, a, b>0 $$

where 

\begin{equation}
B(a, b)= \int_0^1 t^{a-1}(1-t)^{b-1} dt = \frac{\Gamma(a) \Gamma(b)}{\Gamma(a+b)}.
\label{defbeta}
\end{equation}

A classical property of the Beta function is 

\begin{equation}
\forall (x, y) \in \mathbb{R}^+, \, \, B(x+1, y) = \frac{x}{x+y} B(x, y).
\label{betasym}
\end{equation}

Generalizing the definition of the factorials, we set for $n \in \mathbb{N}^*$

$$\left(n- \frac{1}{2}\right)! := \left(n- \frac{1}{2}\right) \left(n- 1-  \frac{1}{2}\right) \dots \left(\frac{1}{2}\right),$$

we have for $i \in \mathbb{N}^*, n\geq i+2$ 

\begin{equation}
B\left(i, n-i- \frac{5}{2} +1\right) = \frac{(i-1)! \left(n-i-2- \frac{1}{2}\right)!}{\left(n-2- \frac{1}{2}\right)!},
\label{beta}
\end{equation}

\begin{equation}
\left(n- \frac{1}{2} \right) ! = \frac{(2n)!}{(2^n)^2 n!}.
\label{factoriel}
\end{equation}

Indeed, Equation \eqref{beta} comes directly from the Definition \eqref{defbeta} and to see Equation \eqref{factoriel}, we fix $k= \frac{1}{2}$ and notice that

$$2^n(n-k)! = (2n-1)(2n-3) \dots 3 \times 1 =
\frac{(2n)!}{2n(2n-2) \dots 6 \times 4 \times 2} 
= \frac{(2n)!}{2^n n!} .$$

Moreover, the cumulative distribution function of the Beta distribution is the regularized incomplete Beta function $I_x(a, b)$. This function satisfies, when $a$ and $b$ are positive integers

\begin{equation*}
\begin{aligned}
I_x(a, b) = \mathbb{P} \left( \mathcal{B}(a+b-1,x) \geq a \right)
\end{aligned}
\end{equation*}

and then, Bernstein inequality for Bernoulli distribution (see for example Theorem 8.2 of \cite{livreaurelien}) gives

\begin{equation}
\label{devincomplete}
\begin{aligned}
I_x(\alpha, \beta) \leq \exp \left( - \frac{3}{8} (a+b-1) x\right)
\end{aligned}
\end{equation}

as soon as $\frac{a}{n} \geq 2 x$.

\subsubsection{Technical lemma}

\begin{lemma}
\label{lemmetechnic}

Let $\delta >1$ and $\beta \in ]0, 1[$.
Then $n^{-1} \displaystyle\sum_{i= \lfloor n \beta \rfloor}^{n-1} \left(1 - \frac{i}{n+1}\right)^{-\delta}= \mathcal{O}\left(\sqrt{n}\right)$ if and only if $\delta \leq \frac{3}{2}.$
\end{lemma}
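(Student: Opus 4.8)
The plan is to reduce the sum to a Riemann-type estimate and compare it with the integral $\int_\beta^{1} (1-t)^{-\delta}\,dt$, which is finite precisely when $\delta<1$ and, when $\delta\geq 1$, diverges at a rate controlled by the behaviour of the last few terms near $i=n-1$. Since $\delta>1$ by hypothesis, the dominant contribution comes from the terms with $i$ close to $n-1$, where $1-\frac{i}{n+1}$ is of order $1/n$, so each such term is of order $n^{\delta}$. First I would write $1-\frac{i}{n+1}=\frac{n+1-i}{n+1}$ and substitute $k=n-i$, turning the sum into
\[
\frac{(n+1)^{\delta}}{n}\sum_{k=1}^{n-\lfloor n\beta\rfloor} k^{-\delta}.
\]
Then I would bound the inner sum $\sum_{k\geq 1} k^{-\delta}$: for $\delta>1$ it converges to $\zeta(\delta)<\infty$, so the whole expression is $\Theta\!\left(n^{\delta-1}\right)$.

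The equivalence then follows by comparing the exponent: $n^{\delta-1}=\mathcal{O}(\sqrt n)$ if and only if $\delta-1\leq \frac12$, i.e.\ $\delta\leq\frac32$. For the "only if" direction I would exhibit a single term that already forces the lower bound, for instance the $k=1$ term, which equals $\frac{(n+1)^{\delta}}{n}\sim n^{\delta-1}$; if $\delta>\frac32$ this term alone is not $\mathcal{O}(\sqrt n)$, so the sum cannot be either (all terms being nonnegative). For the "if" direction I would use the convergence of $\zeta(\delta)$ to get the matching upper bound
\[
\frac{(n+1)^{\delta}}{n}\sum_{k=1}^{\infty} k^{-\delta}=\zeta(\delta)\,\frac{(n+1)^{\delta}}{n}=\mathcal{O}\!\left(n^{\delta-1}\right)=\mathcal{O}(\sqrt n)
\]
when $\delta\leq\frac32$.

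A small subtlety to handle carefully: the lower summation index is $\lfloor n\beta\rfloor$ with $\beta\in]0,1[$ fixed, so the number of terms is $\sim (1-\beta)n\to\infty$ and the substitution range $1\le k\le n-\lfloor n\beta\rfloor$ is nonempty for $n$ large; this is why it is legitimate to replace the finite partial sum of $k^{-\delta}$ by the full series $\zeta(\delta)$ in the upper bound, and to keep just the $k=1$ term in the lower bound. One also checks that the factor $(n+1)^{\delta}/n$ versus $n^{\delta-1}$ differ only by a bounded multiplicative constant, so the asymptotic order is unaffected.

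I do not expect a genuine obstacle here; the only thing requiring a line of care is making the two-sided bound on $\sum_{k=1}^{N} k^{-\delta}$ explicit (bounded above by $\zeta(\delta)$, bounded below by $1$), so that both implications come out cleanly, and noting that all terms are positive so that dropping terms only decreases the sum. The argument is otherwise a routine integral/series comparison.
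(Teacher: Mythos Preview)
Your argument is correct and is essentially the same as the paper's: both perform the index change $j=n+1-i$ (your $k=n-i$ is a harmless shift) to rewrite the sum as $\frac{(n+1)^{\delta}}{n}\sum_{j\ge 2} j^{-\delta}$, invoke the convergence of $\zeta(\delta)$ for $\delta>1$, and read off the order $n^{\delta-1}$. The only slip is that with $k=n-i$ the summand is $(k+1)^{-\delta}$, not $k^{-\delta}$, so the series starts at $j=2$ (hence $\zeta(\delta)-1$ rather than $\zeta(\delta)$); this does not affect the asymptotics or the conclusion.
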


\begin{proof}
Let $\delta>1$.
We have to characterize the $\delta$ for which 

$n^{-\frac{3}{2}}\displaystyle\sum_{i= \lfloor n \beta \rfloor}^{n-1} \left(1 - \frac{i}{n+1}\right)^{-\delta}$ is bounded when $n$ goes to infinity. Let us make the index change $j:=n+1-i$. Those sums become 

$$n^{-\frac{3}{2}}\displaystyle\sum_{j=2 }^{n+1- \lfloor n \beta \rfloor} \left(1-\frac{n+1-j}{n+1} \right)^{-\delta} = \frac{n^{\frac{-3}{2}}}{(n+1)^{-\delta}}\displaystyle\sum_{j=2 }^{n+1- \lfloor n \beta \rfloor} \frac{1}{j^{\delta}} \underset{n \to + \infty}{\sim} n^{\delta - \frac{3}{2}} \left(\zeta(\delta)-1 \right),$$

where $\zeta$ denote the Zeta function. Then, if $\delta > \frac{3}{2}$, $\zeta(\delta)$ is finite and the behaviour of the sum is the same as the one of $n^{-\frac{3}{2} + \delta}$ which diverges to $+ \infty$. On the contrary, if $1 < \delta \leq \frac{3}{2}$, $\zeta(\delta)$ is still finite but $n^{-\frac{3}{2} + \delta}$ is bounded and so does the sum. 
\end{proof}

\subsubsection{A corollary of Lindenberg-Feller theorem}

To prove the asymptotic normality, we use a central limit theorem which is a corollary of the Lindeberg-Feller theorem (see lemma 1 in \cite{uniform}). 

\begin{proposition}
\label{corlinde}
Let $(Y_{1}, \dots, Y_{n})$ be an independent sample of exponential variables of parameter 1 and $(\alpha_{j, n})_{j \leq n, \, n\geq 2}$ be a triangle array of real numbers. 

If $Q_{n}= n^{-1} \displaystyle\sum_{j=1}^{n} \alpha_{j, n} (Y_{j}-1)$ and $\sigma_{n}^{2}=\frac{1}{n} \displaystyle\sum_{j=1}^{n} \alpha_{j, n}^{2}$, then
 
 $$ \frac{\sqrt{n}Q_{n}}{\sigma_{n}} \Longrightarrow \mathcal{N}(0, 1)$$
 
 if and only if $\max_{1 \leq j \leq n} | \alpha_{j, n} | = o(n^{\frac{1}{2}} \sigma_{n})$.

If furthermore $\sigma_{n}$ converges in probability to $\sigma$ then by Slutsky's lemma 

 $$ \sqrt{n}Q_{n} \Longrightarrow \mathcal{N}(0, \sigma^{2}).$$
\end{proposition}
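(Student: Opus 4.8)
The plan is to reduce the statement to the classical Lindeberg--Feller theorem for triangular arrays and then to check, by hand, that for this particular array the Lindeberg condition is equivalent to the stated negligibility condition $M_n:=\max_{1\le j\le n}|\alpha_{j,n}|=o(\sqrt{n}\,\sigma_n)$. First I would set $X_{j,n}:=\alpha_{j,n}(Y_j-1)/(\sqrt{n}\,\sigma_n)$ for $1\le j\le n$ (assuming $\sigma_n>0$, the only non-degenerate case). Since the $Y_j$ are independent with $\mathbb{E}[Y_j]=\Var(Y_j)=1$, each row $(X_{1,n},\dots,X_{n,n})$ consists of independent centered variables with $\sum_{j=1}^{n}\Var(X_{j,n})=(n\sigma_n^2)^{-1}\sum_{j=1}^{n}\alpha_{j,n}^2=1$ and $\sum_{j=1}^{n}X_{j,n}=\sqrt{n}\,Q_n/\sigma_n$, so the claim is exactly that $\sum_{j}X_{j,n}\Longrightarrow\mathcal N(0,1)$ if and only if $M_n=o(\sqrt{n}\,\sigma_n)$.

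For the ``if'' direction, assuming $M_n=o(\sqrt{n}\,\sigma_n)$, I would verify the Lindeberg condition: writing $Y$ for a generic $\mathcal E(1)$ variable and using $|\alpha_{j,n}|\le M_n$, for every $\varepsilon>0$
\[
\sum_{j=1}^{n}\mathbb{E}\!\left[X_{j,n}^{2}\mathbf{1}_{\{|X_{j,n}|>\varepsilon\}}\right]
\le\frac{1}{n\sigma_n^{2}}\Big(\sum_{j=1}^{n}\alpha_{j,n}^{2}\Big)\,
\mathbb{E}\!\left[(Y-1)^{2}\mathbf{1}_{\{|Y-1|>\varepsilon\sqrt{n}\,\sigma_n/M_n\}}\right]
=\mathbb{E}\!\left[(Y-1)^{2}\mathbf{1}_{\{|Y-1|>\varepsilon\sqrt{n}\,\sigma_n/M_n\}}\right],
\]
which tends to $0$ by dominated convergence because $\sqrt{n}\,\sigma_n/M_n\to\infty$ and the exponential law has a finite second moment. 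The Lindeberg--Feller theorem then gives $\sum_{j}X_{j,n}\Longrightarrow\mathcal N(0,1)$; and when moreover $\sigma_n\to\sigma$, the last assertion $\sqrt{n}\,Q_n\Longrightarrow\mathcal N(0,\sigma^2)$ follows immediately from Slutsky's lemma applied to $\sqrt{n}\,Q_n=\sigma_n\cdot(\sqrt{n}\,Q_n/\sigma_n)$.

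For the ``only if'' direction --- which I expect to be the main obstacle, since normality of a sum does not in general force uniform negligibility of the summands --- I would argue by contradiction, exploiting that each $X_{j,n}$ is a genuinely non-Gaussian (shifted, rescaled exponential) variable. If $M_n\ne o(\sqrt{n}\,\sigma_n)$, then since $M_n^2\le\sum_j\alpha_{j,n}^2=n\sigma_n^2$ one can extract a subsequence along which $\alpha_{j_n,n}/(\sqrt{n}\,\sigma_n)\to a$ with $a\ne0$, where $j_n$ realizes the maximum. Then $X_{j_n,n}\Longrightarrow a(Y-1)$, and writing $\sum_{j}X_{j,n}=X_{j_n,n}+R_n$ with $R_n$ independent of $X_{j_n,n}$, L\'evy's continuity theorem (the characteristic function of $a(Y-1)$ never vanishes) shows that $R_n$ converges in law to some $V$ independent of $a(Y-1)$ with $a(Y-1)+V\sim\mathcal N(0,1)$; Cram\'er's factorization theorem for the normal law then forces $a(Y-1)$ to be Gaussian, a contradiction. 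Alternatively one can avoid Cram\'er's theorem: $M_n=o(\sqrt{n}\,\sigma_n)$ is equivalent to $\sum_j\alpha_{j,n}^4=o(n^2\sigma_n^4)$, hence to $\mathbb{E}\big[(\sum_j X_{j,n})^4\big]=3+6\sum_j\alpha_{j,n}^4/(n^2\sigma_n^4)\to 3$, and this fourth-moment convergence is a consequence of convergence in law together with uniform integrability of $(\sum_j X_{j,n})^4$, which follows from Rosenthal's inequality since $\sup_n\sum_j\mathbb{E}|X_{j,n}|^6<\infty$.
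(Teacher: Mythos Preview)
Your proof is correct and self-contained, but you should know that the paper does not actually prove this proposition: it is stated as a known tool and attributed to an external reference (``see lemma~1 in \cite{uniform}''), so there is no ``paper's own proof'' to compare against. What you have written is therefore strictly more than what the paper provides.

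On the substance: the ``if'' direction is clean and standard --- your Lindeberg bound via $|\alpha_{j,n}|\le M_n$ and dominated convergence is exactly right. For the ``only if'' direction, both of your arguments work. The Cram\'er route is conceptually the sharpest (non-Gaussianity of the exponential is the real obstruction), though you should tighten the phrasing slightly: rather than invoking non-vanishing of the characteristic function to force convergence of $R_n$, it is simpler to note that $\Var(R_n)\le 1$ gives tightness, pass to a further subsequence along which $R_n\Rightarrow V$, and then use independence to get $a(Y-1)+V\sim\mathcal N(0,1)$ directly. Your fourth-moment alternative is also correct and quite elegant: the identity $\mathbb E\big[(\sum_j X_{j,n})^4\big]=3+6\sum_j\alpha_{j,n}^4/(n^2\sigma_n^4)$ holds because the fourth central moment of $\mathcal E(1)$ is $9$, and your Rosenthal bound on the sixth moment (using $\sum_j|\alpha_{j,n}|^6\le M_n^4\cdot n\sigma_n^2\le n^3\sigma_n^6$) does yield the uniform integrability needed to pass to the limit.
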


\subsection{Proof of i) of Theorem \ref{superasympto} : consistency of the plug-in estimator (\ref{estimateurSQ})}

\begin{proof}

We aim to show consistency of the estimator (\ref{estimateurSQ}).
Let us first notice that 

$$Q_{\alpha}  = \frac{\mathbb{E} \left(X \mathbf{1}_{X \geq F_X^{-1}(x)(\alpha)} \right)}{1-\alpha} 
 = \frac{\int_{\mathbb{R}}x \mathbf{1}_{X \geq F_X^{-1}(\alpha)} f_X(x) dx}{1- \alpha} = \frac{\int_{\alpha}^1 F_X^{-1}(y) dy}{1-\alpha}. $$

Thus, we need to show that

$$\frac{1}{n} \sum_{i = \lfloor n \alpha \rfloor }^n X_{(i)} - \int_{\alpha}^1 F_X^{-1}(y) dy \underset{n \to + \infty}{\longrightarrow} 0 \, \,  a.s.$$ 
In the sequel we omit the index $X$ in $F_X^{-1}$ because there is no ambiguity. Let us introduce the two following quantities and show their convergence to 0 in probability.   $$A_n = \frac{1}{n} \sum_{i = \lfloor n \alpha \rfloor }^n X_{(i)} - \frac{1}{n} \sum_{i = \lfloor n \alpha \rfloor }^n F^{-1}\left(\frac{i}{n+1}\right), $$ and $$B_n = \frac{1}{n} \sum_{i = \lfloor n \alpha \rfloor }^n F^{-1}\left(\frac{i}{n+1}\right) -  \int_{\alpha}^1 F^{-1}(y) dy.$$

Let us first deal with $A_n$. 

%Denoting by $F_n$ the empirical cumulative distribution function, we have $X_{(i)}=F_n^{-1}\left(\frac{i}{n}\right)$ and
%\begin{equation*}
%\begin{aligned}
%A_n & =  \frac{1}{n} \sum_{i = \lfloor n \alpha \rfloor }^n \left(F_n^{-1}\left(\frac{i}{n+1}\right) - F^{-1}\left(\frac{i}{n+1}\right)\right) .\\
%\end{aligned}
%\end{equation*}

We know by (\ref{loi}) that $X_{(i)} \sim F_X^{-1}\left( U_{(i)} \right)$ where $U_{(i)}$ is distributed like the $i^{th}$ ordered statistic of a uniform sample. Thus, defining $U_{(i)}$ with distribution $\mathcal{B}(i, n+1-i)$, it holds that,

$$A_n=\frac{1}{n} \displaystyle\sum_{i= \lfloor n \alpha \rfloor}^n\left(  F^{-1}\left(U_{(i)}\right) - F^{-1}\left(\frac{i}{n+1} \right) \right).$$

We now need to split the sum in two parts. First, let us deal with the last term in the sum (which gives actually the biggest contribution). By the mean value theorem, there exists $w_n \in \left[ U_{(n)}, \frac{n}{n+1} \right]$ (where we use non-oriented interval) such that 

$$\frac{1}{n} \left( F^{-1}(U_{(n)}) - F^{-1}\left( \frac{n}{n+1} \right) \right) = \frac{1}{n} \left( U_{(n)} - \frac{n}{n+1} \right)l(w_n).$$

Since (\ref{consistanceuniforme}) holds, and thanks to assumption \textbf{H3}, there exists a constant $C_1$ such that for $n$ big enough

\begin{equation}
\begin{aligned}
\label{pareil}
\left| \frac{1}{n} \left( F^{-1}(U_{(n)}) - F^{-1}\left( \frac{n}{n+1} \right) \right) \right|& \leq \frac{C_1}{n} \frac{1}{(1-w_n)^{2-\epsilon_l}} \left|U_{(n)} - \frac{n}{n+1} \right| \\
& \leq \frac{1}{n} \frac{C_1}{\left(1- \frac{n}{n+1} \right)^{2-\epsilon_l}} \left|U_{(n)} - \frac{n}{n+1} \right| + \frac{1}{n} \frac{C_1}{\left( 1- U_{(n)} \right)^{2-\epsilon_l}} \left|U_{(n)} - \frac{n}{n+1} \right|\\
& \leq \frac{C_1}{n} (n+1)^{2-\epsilon_l}|U_{(n)} -1 | + \frac{C_1}{n} (n+1)^{2- \epsilon} \left| \frac{n}{n+1}-1 \right| \\
&  + \frac{C_1}{n} \frac{1}{\left( 1- U_{(n)} \right)^{2 - \epsilon_l}} \left| U_{(n)} -1 \right| + \frac{C_1}{n} \frac{1}{\left( 1- U_{(n)} \right)^{2 - \epsilon_l}} \left| \frac{n}{n+1} -1 \right| \\
& \leq C_1W_n\frac{(n+1)^{2-\epsilon_l}}{n^2} + C_1\frac{(n+1)^{2-\epsilon_l}}{n(n+1)} + \frac{C_1}{W_n^{1-\epsilon_l} n^{\epsilon_l}} + \frac{C_1}{W_n^{2-\epsilon_l}} \frac{n^{1-\epsilon_l}}{n+1} \\
\end{aligned}
\end{equation}

where $W_n=n\left(1- U_{(n)} \right)$. Thanks to (\ref{TCLuniforme}) and the Slutsky lemma, we have shown the convergence in probability to 0 of this term. Terms for $i=n-1$ and $i=n-2$ can be treated exactlty in the same way.

\vspace{0.3cm}

Let us now deal with the remaining sum (for i from $\lfloor n \alpha \rfloor$ to $n-3$) that we still denote $A_n$. By the mean value theorem, we will upper-bound $A_n$ by a quantity depending on $l$. Since we know the behaviour of $l$ on every compact set and in the neighborhood of 1, we begin by showing that when $n$ becomes big enough, $U_{(\lfloor n \alpha \rfloor)}$ is far away from $0$. Let $\frac{\alpha}{2}\geq \epsilon> 0$ be a positive real number. We have,

$$A_n= A_n \mathbf{1}_{U_{( \lfloor n \alpha \rfloor ) }< \epsilon} + A_n \mathbf{1}_{U_{( \lfloor n \alpha \rfloor ) }\geq \epsilon}.$$

Then, for $\eta'>0$, thanks to (\ref{devincomplete}) of recallings and because for $n$ big enough $\frac{\lfloor n \alpha \rfloor}{n} \geq \frac{\alpha}{2} \geq \epsilon$, we get

\begin{equation}
\label{zero}
\begin{aligned}
\P( A_n \mathbf{1}_{U_{( \lfloor n \alpha \rfloor )} < \epsilon} > \eta') \leq \P(U_{( \lfloor n \alpha \rfloor )} < \epsilon) \leq \exp \left( - \frac{3n \epsilon}{8} \right). \\
\end{aligned}
\end{equation}

Then, it is enough to show the convergence to 0 in probability of $A_n':=A_n \mathbf{1}_{U_{( \lfloor n \alpha \rfloor )} \geq \epsilon}$. Let us show its converges to 0 in $L^1$.

By the mean value theorem, we know that there exists $w_i \in ]U_{(i)}, \frac{i}{n+1}[$ (we do not know if $U_{(i)}$ is smaller or bigger than $\frac{i}{n+1}$ but in the sequel we still denote the segment bewteen $U_{(i)}$ and $\frac{i}{n+1}$ in this way) such that

\begin{equation*}
\begin{aligned}
\mathbb{E} ( |A_n'|) \leq \frac{1}{n} \displaystyle\sum_{i= \lfloor n-3 \alpha \rfloor}^n \mathbb{E} \left[ \Big|U_{(i)} - \frac{i}{n+1} \Big| l(w_i) \mathbf{1}_{U_{(\lfloor n \alpha \rfloor)} \geq \epsilon} \right] .\\
\end{aligned}
\end{equation*}

But, for all $i \in \{ \lfloor n \alpha \rfloor, \dots, n-3\}$, we get 
$U_{(i)} \geq U_{(\lfloor n \alpha \rfloor)} \geq \epsilon$. Moreover, for $n$ big enough, we get, $\frac{i}{n+1} \geq \frac{\lfloor n \alpha \rfloor}{n+1} \geq \frac{\alpha}{2} \geq \epsilon$. Then,

$$\forall i \in \{ \lfloor n \alpha \rfloor, \dots, n-3\}, \, \, w_i \in [\epsilon, 1 [.$$

Then, according to \textbf{H3} and Remark \ref{appelle} (here since we do not deal with biggest terms in the sum we only need a weaker assumption than \textbf{H3}), we get

\begin{itemize}
\item for any arbitrary $\eta >0$, there exists $\xi_{\eta} >0$ such that 

$$\forall t \in ]1 - \xi_{\eta}, 1[, \, \, l(t) \leq \frac{\eta}{(1-t)^2}$$

\item because $l$ is continuous, there exists a constant $C$ such that 

$$\forall t  \in [\epsilon, 1- \xi_{\eta}], \, \, l(t) \leq C.$$

\end{itemize}

Let us then look at the sum on two differents events. 

%Then, for all $i$ from $\lfloor n \alpha \rfloor$ to $n$, $w_i \in ]U_{(i)}, \frac{i}{n+1}[$. Since $U_{(\lfloor n \alpha \rfloor)} > \epsilon$, we get $U_{(i)} > \epsilon$ for all $i$. Moreover, for all $i$, $\frac{i}{n+1}  \frac{\alpha}{2} \geq \epsilon$ and then
%
%$$\forall i \in \left[ \lfloor n \alpha \rfloor , n\right], \, \, w_i\in [\epsilon, 1[.$$
%
%Let us now cut the sum into two subsums, the one which terms are in the neighborhoods of one and the one where terms are in the compact set.

\begin{equation*}
\begin{aligned}
\mathbb{E}(|A_n'|) & \leq \frac{1}{n} \displaystyle\sum_{i= \lfloor n \alpha \rfloor}^{n-3} \mathbb{E} \left[ \Big|U_{(i)} - \frac{i}{n+1} \Big| l(w_i) \mathbf{1}_{U_{(\lfloor n \alpha \rfloor)} \geq \epsilon} \mathbf{1}_{w_i >1-\xi_{\eta}}\right] \\
& + \frac{1}{n} \displaystyle\sum_{i= \lfloor n \alpha \rfloor}^{n-3} \mathbb{E} \left[ \Big|U_{(i)} - \frac{i}{n+1} \Big| l(w_i) \mathbf{1}_{U_{(\lfloor n \alpha \rfloor)} \geq \epsilon} \mathbf{1}_{w_i \leq 1-\xi_{\eta}}\right] \\
& := T_n^1+T_n^2\\
\end{aligned}
\end{equation*}

But,

\begin{equation*}
\begin{aligned}
T_n^{1} & \leq \frac{1}{n} \displaystyle\sum_{i= \lfloor n \alpha \rfloor}^{n-3} \mathbb{E} \left[ \Big|U_{(i)} - \frac{i}{n+1} \Big| l(w_i) \mathbf{1}_{\epsilon \leq w_i \leq 1-\xi_{\eta}}\right] \\
& \leq \frac{1}{n} \displaystyle\sum_{i= \lfloor n \alpha \rfloor}^{n-3} \mathbb{E} \left[ \Big|U_{(i)} - \frac{i}{n+1} \Big| C \right]\\
&\leq \frac{1}{n} \displaystyle\sum_{i= \lfloor n \alpha \rfloor}^{n-3} C \sqrt{\mbox{Var}(U_{(i)})} \\
& \leq \frac{1}{n} \displaystyle\sum_{i= \lfloor n \alpha \rfloor}^{n-3} C \sqrt{\frac{i(n-i+1)}{(n+1)^2(n+2)}} \\
&\sim \frac{1}{\sqrt{n}}  \frac{1}{n} \displaystyle\sum_{i= \lfloor n \alpha \rfloor}^{n-3} C \sqrt{\frac{i}{n}\left(1-\frac{i}{n}\right)} = \mathcal{O} \left( \frac{1}{\sqrt{n}} \right).\\
\end{aligned}
\end{equation*}

thanks to convergence of Rieman's sum of the continuous function $x \mapsto \sqrt{x(1-x)}$ on $[\alpha, 1]$. Then, this terms goes to 0. Let us conclude by dealing with the last term. 

\begin{equation}
\label{eqmax}
\begin{aligned}
T_n^{2} & \leq \frac{1}{n} \displaystyle\sum_{i= \lfloor n \alpha \rfloor}^{n-3} \mathbb{E} \left[ \Big|U_{(i)} - \frac{i}{n+1} \Big| l(w_i) \mathbf{1}_{w_i > 1-\xi_{\eta}}\right] \\
&\leq \frac{1}{n} \displaystyle\sum_{i= \lfloor n \alpha \rfloor}^{n-3} \mathbb{E} \left[ \Big|U_{(i)} - \frac{i}{n+1} \Big| \frac{\eta}{(1-w_i)^2} \right]\\
& \leq  \frac{1}{n} \displaystyle\sum_{i= \lfloor n \alpha \rfloor}^{n-3}  \mathbb{E} \left[ \Big|U_{(i)} - \frac{i}{n+1} \Big| \max \left( \frac{\eta}{(1-U_{(i)})^2},\frac{\eta}{(1-\frac{i}{n+1})^2}\right) \right].\\ 
\end{aligned}
\end{equation}

But, by the Cauchy-Schwartz inequality,

\begin{equation*}
\begin{aligned}
 \frac{1}{n} \displaystyle\sum_{i= \lfloor n \alpha \rfloor}^{n-3}  \mathbb{E} \left[ \big|U_{(i)} - \frac{i}{n+1} \Big|  \frac{\eta}{(1-U_{(i)})^2} \right]& \leq \frac{\eta}{n} \displaystyle\sum_{i= \lfloor n \alpha \rfloor}^{n-3} \sqrt{\mbox{Var}(U_{(i)}) \mathbb{E} \left( \frac{1}{(1-U_{(i)})^4}\right)} \\
\end{aligned}
\end{equation*}

Since $i < n-2$, all the terms of the sum can be expressed using Beta functions and then the expectation is finite (and this is for this reason that we ca not include biggest terms $i=n$, $i=n-1$ and $i=n-2$ in this reasonning). Indeed,

\begin{small}
\begin{equation*}
\begin{aligned}
 \frac{\eta}{n} \displaystyle\sum_{i= \lfloor n \alpha \rfloor}^{n-3} \sqrt{\mbox{Var}(U_{(i)}) \mathbb{E} \left( \frac{1}{(1-U_{(i)})^4}\right) } & \leq \frac{\eta}{n} \displaystyle\sum_{i= \lfloor n \alpha \rfloor}^{n-3} \sqrt{\frac{i(n-i+1)}{(n+1)^2(n+2)}} \sqrt{\frac{B(i, n+1-i) }{\int_0^1 x^{i-1}(1-x)^{n+1-i-4} dx}} \\
& = \frac{\eta}{n} \displaystyle\sum_{i= \lfloor n \alpha \rfloor}^{n-3} \sqrt{\frac{i(n-i+1)}{(n+1)^2(n+2)}} \sqrt{\frac{B(i, n+1-i-3)}{B(i, n+1-i)}} \\
& = \frac{\eta}{n} \displaystyle\sum_{i= \lfloor n \alpha \rfloor}^{n-3} \sqrt{\frac{n(n-1)(n-2)}{(n+1)^2(n+2)}\frac{i(n-i+1)}{(n-i)(n-1-i)(n-2-i)}} \\
\end{aligned}
\end{equation*}
\end{small}
 
where we used recallings on Beta function. The final term has the same behaviour when $n$ goes to $+ \infty$ that

$$ \eta \frac{1}{n^{\frac{3}{2}}} \displaystyle\sum_{i= \lfloor n \alpha \rfloor}^{n-3} \frac{1}{\left( 1- \frac{i}{n} \right)^{\frac{3}{2}}}:= \eta \times V_n$$

Since Lemma \ref{lemmetechnic} implies that $V_n$ is bounded independently of $\eta$ and because $\eta$ is arbitrary small, we have shown the converge to 0. Then the term in $U_{(i)}$ on the maximum of Equation (\ref{eqmax}) converges to 0. The second term can be computed in the same way

\begin{equation*}
\begin{aligned}
 \frac{\eta}{n} \displaystyle\sum_{i= \lfloor n \alpha \rfloor}^{n-3} \sqrt{\mbox{Var}(U_{(i)}) \mathbb{E} \left( \frac{1}{(1-\frac{i}{n+1})^4}\right) }\sim \frac{1}{\sqrt{n}}\frac{\eta}{n} \displaystyle\sum_{i= \lfloor n \alpha \rfloor}^{n-3} \frac{1}{\left( 1- \frac{i}{n+1} \right)^{\frac{3}{2}}}. \\
 \end{aligned}
\end{equation*}

which also converges to 0 thanks to Lemma \ref{lemmetechnic}. Finally, $A_n'$ converges to 0 in $L^1$ and so in probability. So is $A_n$.

\vspace{0.3cm}

Let us now study the term $$B_n= \frac{1}{n} \displaystyle\sum_{i = \lfloor n \alpha \rfloor }^n F^{-1}\left(\frac{i}{n+1}\right) -  \int_{\alpha}^1 F^{-1}(y) dy.$$ We will show that this term converges to 0 thanks to a generalized Riemann sum convergence due to the monotonicity of $F^{-1}$.

\begin{remark}
\label{help}
To begin with, for $\epsilon >0$ it is easy to show that

$$B_n^{\epsilon}= \frac{1}{n} \displaystyle\sum_{i= \lfloor n \alpha \rfloor}^{\lfloor n (1 - \epsilon) \rfloor} F^{-1}\left( \frac{i}{n+1} \right) - \int_{\alpha}^{1- \epsilon} F^{-1}(y) dy$$

converges to 0. Indeed, it is the convergence of the Riemann sum for the continuous function $F^{-1}$.

% convergence when the sum and the integral are truncated at $1- \epsilon$, $\forall 0< \epsilon <1$ (convergence of a Riemann sum for a continuous functions).
\end{remark}

Let us fix $\epsilon  >0$. According to the previous remark, we split the forthcoming sum in two parts.

$$B_n= \frac{1}{n+1} \sum_{i = \lfloor n \alpha \rfloor }^{\lfloor n (1- \epsilon) \rfloor} F^{-1}\left(\frac{i}{n+1}\right) + \frac{1}{n+1} \sum_{\lfloor n (1- \epsilon) \rfloor +1}^{n-1} F^{-1}\left(\frac{i}{n+1}\right) := S_n^1 + S_n^2. $$

Since the quantile function is non-decreasing on $[\alpha, 1]$, we have :

\begin{alignat*}{1}
& \int_{\frac{\lfloor n \alpha \rfloor -1}{n+1}}^{\frac{\lfloor n (1- \epsilon) \rfloor}{n+1}} F^{-1}(t) dt + \int_{\frac{\lfloor n (1- \epsilon) \rfloor}{n+1}}^\frac{n-1}{n+1} F^{-1}(t) dt:=C_n^1+C_n^2 \\
& \leq  \frac{1}{n+1} \sum_{i = \lfloor n \alpha \rfloor }^{\lfloor n (1- \epsilon) \rfloor} F^{-1}\left(\frac{i}{n+1}\right) + \frac{1}{n+1} \sum_{\lfloor n (1- \epsilon) \rfloor +1}^{n-1} F^{-1}\left(\frac{i}{n+1}\right) :=S_n^1+S_n^2\\
& \leq  \int_{\frac{\lfloor n \alpha \rfloor }{n+1}}^{\frac{\lfloor n (1- \epsilon) \rfloor}{n+1}} F^{-1}(t) dt + \int_{\frac{\lfloor n (1- \epsilon) \rfloor}{n+1}}^\frac{n}{n+1} F^{-1}(t) dt:=D_n^1+D_n^2. \\
\end{alignat*}

Then, we have :

$$(C_n^1-S_n^1) + (C_n^2 - D_n^2) \leq (S_n^2 - D_n^2) \leq (D_n^1 - S_n^1) $$

If we show that $C_n^2-D_n^2$ converge to 0, we can conclude using comparison theorem, beacause the convergence of $D_n^1 - S_n^1$ and $C_n^1 - S_n^1$ to 0 is true thanks to the Remark \ref{help}. Let us then show this convergence 

As in the neighborhood of 1, $l(t)=o\left((1-t)^{-2}\right)$ (Remark \ref{appelle}), we also have $F^{-1}(t) = o \left( (1-t)^{-1} \right). $

Then, for $\epsilon >0$, there exist $N$ such that for $n \geq N$ :

$$C_n^2-D_n^2  =  - \int_{\frac{n-1}{n+1}}^{\frac{n}{n+1}} F^{-1}(t) dt  
\leq  \epsilon \int_{\frac{n-1}{n+1}}^{\frac{n}{n+1}} \frac{1}{1-t} dt 
= \epsilon \ln \left(2\right). $$

Finally, $S_n^2- D_n^2$ converges to 0 a.s. So that, the same holds for $B_n$.

We have shown that $A_n+B_n$ converge to 0 in probability. So under our hypothesis, the superquantile is consistent in probability. 

\vspace{0.3cm}

\begin{remark}
\label{remarkordresup}

Using the same arguments, we can show that under stronger hypothesis on the quantile function $F^{-1}(t)=o \left(\frac{1}{(1-t)^{\frac{1}{2}} }\right)$ (that is the case in ii) of Proposition \ref{superasympto}), we have 

$$- \int_{\frac{n-1}{n+1}}^{\frac{n}{n+1}} F^{-1}(t) dt  \leq  \epsilon \int_{\frac{n-1}{n+1}}^{\frac{n}{n+1}} \frac{1}{(1-t)^{\frac{1}{2}}} dt = \epsilon -2(1- \sqrt{2}) \frac{1}{\sqrt{n}}. $$

Then

$$ \sqrt{n} \left( \frac{1}{n} \sum_{i = \lfloor n \alpha \rfloor }^n F^{-1}(\frac{i}{n+1}) -  \int_{\alpha}^1 F^{-1}(y) dy \right) \underset{n\to+\infty}{\longrightarrow} 0.$$

We will use this result in the next part. 
\end{remark}

\end{proof}

\subsubsection{Proof of ii) of Proposition \ref{superasympto} : asymptotic normality of the plug-in estimator (\ref{estimateurSQ})}

Let us prove the asymptotic normality of the estimator of the superquantile. To begin with, we can make some technical remarks. 

\begin{remark}
\label{primitive}
The assumption on $L$ implies that there exists $\epsilon_l>0$ and $\epsilon_{F^{-1}}>0$ such that $ l(t)=O\left((1-t)^{- \frac{3}{2}+ \epsilon_l}\right)$, and $F^{-1}(t)=O\left((1-t)^{- \frac{1}{2}+ \epsilon_{F^{-1}}}\right).$ It also implies that in the neighborhood of 1, $L(t)=o\left((1-t)^{-\frac{5}{2}}\right)$.
\end{remark}

\begin{proof}

The proof stands in three steps. First we reformulate and simplify the problem and apply the Taylor Lagrange formula. Then, we show that the second order term converges to 0 in probability. In the third step, we identify the limit of the first order term. 

\vspace{0.3cm}

\textbf{Step 1 : Taylor-Lagrange formula}

\vspace{0.3cm}
Let us first omit $\alpha^{-1}$.
We have to study the convergence in distribution of 

$$ \sqrt{n} \left( \frac{1}{n} \displaystyle\sum_{i=\lfloor n \alpha \rfloor }^{n} X_{(i)} - \int_{\alpha}^{1} F^{-1}(y) dy  \right).$$

We have already noticed (Remarks \ref{remarkordresup} and \ref{primitive}) that

$$\sqrt{n} \left[  \frac{1}{n} \displaystyle\sum_{i=\lfloor n \alpha \rfloor }^{n} F^{-1}\left(\frac{i}{n+1}\right) - \int_{\alpha}^{1} F^{-1}(y) dy \right]  \underset{n \to + \infty}\longrightarrow 0 .$$

Thus, Slutsky's lemma, allows us to study only the convergence in law of

$$\sqrt{n} \left[ \frac{1}{n} \displaystyle\sum_{i=\lfloor n \alpha \rfloor }^{n} X_{(i)} - \frac{1}{n} \displaystyle\sum_{i=\lfloor n \alpha \rfloor }^{n} F^{-1}\left(\frac{i}{n+1}\right) \right]. $$

The quantile function $F^{-1}$ is two times differentiable so that we may apply the first order Taylor-Lagrange formula. Using the same argument as in the proof of i), we introduce $U_{(i)}$ a random variable distributed as a $\mathcal{B}(i, n+1-i)$. Considering an equality in law we then have 

\begin{alignat*}{1}
\sqrt{n}\left(\frac{1}{n} \displaystyle\sum_{i=\lfloor n \alpha \rfloor +1}^{n} \left[ X_{(i)}-F^{-1}\left(\frac{i}{n+1}\right) \right] \right) & \stackrel{\mathcal{L}}{=}  \sqrt{n} \left[\frac{1}{n} \displaystyle\sum_{i=\lfloor n \alpha \rfloor +1}^{n} \left( U_{(i)} - \frac{i}{n+1} \right) \frac{1}{f\Big(F^{-1}(\frac{i}{n+1})\Big)}\right] \\
& +  \frac{1}{\sqrt{n}} \displaystyle\sum_{i=\lfloor n \alpha \rfloor +1}^{n} \left[  \int_{\frac{i}{n+1}}^{U_{(i)}}  \frac{f'(F^{-1}(t))}{ \left[f(F^{-1}(t)) \right]^{3}}  \left(U_{(i)} - t \right) dt \right]. \\
\end{alignat*}

Let us call $\sqrt{n} Q_{n}$ the first-order term and $R_{n}$ the second-order one. 

\vspace{0.3cm}

\textbf{Step 2 : The second-order term converges to 0 in probability}

\vspace{0.3cm}

Let us show that  $R_{n}$ converges to 0 in probability. We will use the same decomposition as for $A_n$. First, we deal with the last term : $i=n$. Still using (\ref{consistanceuniforme}) and \textbf{H3} we have the existence of a constant $C_2$ such that for $n$ big enough,

\begin{small}
\begin{equation*}
\begin{aligned}
\left|\frac{1}{\sqrt{n}} \int_{\frac{n}{n+1}}^{U_{(n)}} L(t) \left( U_{(n)} -t \right) dt \right| & \leq \frac{1}{\sqrt{n}} \max \left( \frac{C_2}{\left( 1- U_{(n)} \right)^{\frac{5}{2}- \epsilon_L}}, \frac{C_2}{\left( 1- \frac{n}{n+1} \right)^{\frac{5}{2}- \epsilon_L}}\right) \frac{\left(U_{(n)} - \frac{n}{n+1}\right)^2}{2}\\
 & = \frac{1}{\sqrt{n}} \frac{\left(U_{(n)} - \frac{n}{n+1} \right)^2}{2} \frac{C_2}{\left(1-U_{(n)}\right)^{\frac{5}{2} - \epsilon_L}} + \frac{1}{\sqrt{n}} \frac{\left(U_{(n)} - \frac{n}{n+1} \right)^2}{2} \frac{C_2}{\left(1-\frac{n}{n+1}\right)^{\frac{5}{2} - \epsilon_L}}\\
\end{aligned}
\end{equation*}
\end{small}

which converges to 0 in probability exactly as in (\ref{pareil}) thanks to (\ref{TCLuniforme}) and Slutsky lemma. This is the same idea for the term $i=n-1$. 

\vspace{0.3cm}

Let us now deal with the remaining sum (for $i$ from $ \lfloor n \alpha \rfloor$ to $n-2$) that we still denote $R_n$. We use same kind of reasonning that for $A_n$. First, we still have for $\frac{\alpha}{2}\geq \epsilon >0$,

\begin{equation*}
\begin{aligned}
R_n= R_n\mathbf{1}_{U_{( \lfloor n \alpha \rfloor)} < \epsilon} + R_n\mathbf{1}_{U_{( \lfloor n \alpha \rfloor)} \geq \epsilon}
\end{aligned}
\end{equation*}

The first term converges in probability to 0 using the same argument as in (\ref{zero}). Let us then deal with the second term that we denote $R_n'$ and show its convergence in $L^1$. Since \textbf{H4} gives also informations on a neighborhood of 1 and on every compact set, we will use the same kind of argument as before (and so Remark \ref{appelle}). For $\eta>0$, there exists, thanks to \textbf{H4}, a real number $\xi_{\eta}$ such that 

\begin{itemize}
\item $\forall t \in ]1- \xi_{\eta}, 1[, |L(t)|\leq \frac{\eta}{(1-t)^{ \frac{5}{2}}}$. 

\item On $[\epsilon, 1- \xi_{\eta}]$ which is a compact set, the function $L$ is bounded by a constant $C_3$. 
\end{itemize}

Moreover, since $U_{( \lfloor n \alpha \rfloor)} \geq \epsilon$, we have already seen that 

$$ ]U_{(i)}, \frac{i}{n+1}[ \subset [\epsilon, 1[, \, \, (i= \lfloor n \alpha \rfloor,  \lfloor n \alpha \rfloor +1, \, \, \dots \, \, n).$$

Finally, we get, by denoting $m_i=\min\left( U_{(i)}, \frac{i}{n+1} \right)$ and $M_i=\max\left( U_{(i)}, \frac{i}{n+1} \right)$, $(i= \lfloor n \alpha \rfloor,  \lfloor n \alpha \rfloor +1, \, \, \dots \, \, n)$,

\begin{small}
\begin{equation*}
\begin{aligned}
\mathbb{E}(|R_n'|) & \leq \frac{1}{\sqrt{n}} \displaystyle\sum_{i= \lfloor n \alpha \rfloor}^{n-2} \mathbb{E} \left( \Big| \int_{U_{(i)}}^{\frac{i}{n+1}} \left( U_{(i)} -t \right) L(t)dt \Big| \right)\\
& \leq  \frac{1}{\sqrt{n}} \displaystyle\sum_{i= \lfloor n \alpha \rfloor}^{n-2} \mathbb{E} \left( \int_{m_i}^{1- \xi_{\eta}} \Big| U_{(i)} -t \Big| |L(t)| dt  \right) + \frac{1}{\sqrt{n}} \displaystyle\sum_{i= \lfloor n \alpha \rfloor}^{n-2} \mathbb{E} \left( \int_{1- \xi_{\eta}}^{M_i} \Big| U_{(i)} -t \Big| |L(t)| dt  \right) \\
& \leq  \frac{1}{\sqrt{n}} \displaystyle\sum_{i= \lfloor n \alpha \rfloor}^{n-2}  C_3 \mathbb{E} \left( \frac{\left( U_{(i)}-\frac{i}{n+1} \right)^2}{2} \right) + \frac{1}{\sqrt{n}} \displaystyle\sum_{i= \lfloor n \alpha \rfloor}^{n-2}  \mathbb{E} \left(\max \left( \frac{\eta}{\left( 1 - U_{(i)} \right)^{\frac{5}{2}}}, \frac{\eta}{\left( 1 - \frac{i}{n+1} \right)^{\frac{5}{2}}} \right) \frac{\left( U_{(i)} - \frac{i}{n+1}\right)^2}{2} \right) \\
& \leq \frac{1}{\sqrt{n}} \frac{C_3}{2(n+2)} \displaystyle\sum_{i= \lfloor n \alpha \rfloor}^{n-2} \frac{i}{n+1} \left( 1 - \frac{i}{n+1} + \frac{1}{n+1} \right) :=S_n^1 \\
&+ \frac{1}{\sqrt{n}} \displaystyle\sum_{i= \lfloor n \alpha \rfloor}^n  \mathbb{E} \left(\frac{\eta}{\left( 1 - U_{(i)} \right)^{\frac{5}{2}}} \frac{\left( U_{(i)} - \frac{i}{n+1}\right)^2}{2} \right) :=S_n^2 \\
& + \frac{1}{\sqrt{n}} \displaystyle\sum_{i= \lfloor n \alpha \rfloor}^{n-2} \mathbb{E} \left(\frac{\eta}{\left( 1 - \frac{i}{n+1}\right)^{\frac{5}{2}} } \frac{\left( U_{(i)} - \frac{i}{n+1}\right)^2}{2} \right):=S_n^3.\\
\end{aligned}
\end{equation*}
\end{small}

$S_n^1$ converges to 0 as the Riemann sum of the continuous function $x \mapsto x(1-x)$ multiplied by $n^{- \frac{1}{2}}$. We have also

\begin{equation*}
\begin{aligned}
S_n^3 &=  \frac{1}{\sqrt{n}} \displaystyle\sum_{i= \lfloor n \alpha \rfloor}^{n-2}  \frac{\eta}{\left( 1 - \frac{i}{n+1}\right)^{\frac{5}{2}}} \frac{i(n+1-i)}{(n+2)(n+1)^2} \\
\end{aligned}
\end{equation*}

which has the same behaviour when $n$ goes to $+\infty$ that

$$ \eta \frac{1}{\sqrt{n^{\frac{3}{2}}}}  \displaystyle\sum_{i= \lfloor n \alpha \rfloor}^{n-2} \frac{1}{\left(1- \frac{i}{n}\right)^{\frac{3}{2}}} := \eta V_n$$

which goes to 0 because thanks to Lemma \ref{lemmetechnic}, $V_n$ is bounded independently of $\eta$ and because $\eta$ is arbitrary small. Finally, to study the converges to 0 of $S_n^2$, we have to compute

\begin{alignat*}{1}
  \mathbb{E} \left(\frac{\left(U_{(i)} - \frac{i}{n+1}\right)^2}{\left(U_{(i)} - 1\right)^\frac{5}{2}} \right) & = \frac{1}{B(i, n+1-i)} \int_0^1 x^{i-1}(1-x)^{n-i-\frac{5}{2}} \left(x- \frac{i}{n+1} \right)^2 dx \\
 & = \frac{1}{B(i, n+1-i)} \Big( \int_0^1 x^{i+1}(1-x)^{n-i-\frac{5}{2}} dx -2 \frac{i}{n+1} \int_0^1 x^{i}(1-x)^{n-i-\frac{5}{2}} dx \\
 & + \left(\frac{i}{n+1} \right)^2 \int_0^1 x^{i-1}(1-x)^{n-i-\frac{5}{2}} \Big). \\
\end{alignat*}

Let us call this last quantity $I_n^i$. For $i < n-1$,

\begin{alignat*}{1}
I_n^i  & = \frac{1}{B(i, n+1-i)} \Big[ B\left(i+2, n-i-\frac{5}{2} +1\right) - 2 \frac{i}{n+1}  B \left(i+1, n-i-\frac{5}{2} +1 \right) \\
& + \left(\frac{i}{n+1}\right)^2 B \left(i, n-i-\frac{5}{2} +1 \right) \Big]. \\
\end{alignat*}

So that using \eqref{betasym} we obtain

\begin{alignat*}{1}
 I_n^i & = \frac{B \left(i, n-i-\frac{5}{2}+1\right)}{B(i, n+1-i)} \left[ \frac{i(i+1)}{\left(n-\frac{5}{2} +2\right)\left(n- \frac{5}{2} +1\right)} \right. \\
& - \left.2  \frac{i^2}{\left(n-\frac{5}{2} +1\right)(n+1)}  \frac{i(i+1)}{\left(n-\frac{5}{2} +2\right)\left(n- \frac{5}{2} +1\right)} 
+ \left(\frac{i}{n+1}\right)^2  \frac{i(i+1)}{\left(n-\frac{5}{2} +2\right)\left(n- \frac{5}{2} +1\right)} \right]. \\
\end{alignat*}

Let $E_n^i$ be such that $I_n^i= \frac{B(i, n-i-\frac{5}{2}+1)}{B(i, n+1-i)} E_n^i$. Expanding $E_i^n$ gives when $n$ goes to infinity 

$$E_i^n \sim \frac{1}{n} \frac{i}{n+1} \left( 1- \frac{i}{n+1} \right).$$

Let us study the term $\frac{B\left(i, n-i-\frac{5}{2} +1\right)}{B(i, n+1-i)}$. Using $\eqref{defbeta}$ and $\eqref{factoriel}$, we obtain 

\begin{alignat*}{1}
 \frac{B\left(i, n-i-\frac{5}{2} +1\right)}{B(i, n+1-i)} 
& = \frac{n!}{\left(n-2- \frac{1}{2}\right)!} \frac{\left(n-i-2- \frac{1}{2}\right)!}{(n-i)!} \\
& =\frac{n(n-1)}{(n-i-1)(n-i)} \frac{\left(2(n-i-2)\right)! ((n-2)!)^2 2^{2i}}{((n-i-2)!)^2 (2(n-2))!} \\
\end{alignat*}

Since each $i$ can be written as $i= \lfloor n \beta \rfloor$ with $\beta<1$,  $n-i$ goes to infinity when $n$ goes to infinity  and we can apply the Stirling formula:

\begin{alignat*}{1}
\frac{(2(n-i-2))! }{((n-i-2)!)^2 } & \underset{n\to+\infty}{\sim } \frac{\sqrt{2(n-i-2) 2 \pi} \left( \frac{2(n-i-2)}{e}\right)^{2(n-i-2)}}{2 \pi (n-i-2) \left(\frac{n-i-2}{e}\right)^{n-i-2}} \underset{n\to+\infty}{\sim} \frac{2^{2(n-i-2)}}{\sqrt{\pi(n-i-2)}}.
\end{alignat*}

Likewise, 

\begin{alignat*}{1}
\frac{(2(n-2))! }{((n-2)!)^2 } & \underset{n\to+\infty}{\sim} \frac{2^{2(n-2)}}{\sqrt{\pi(n-2)}}.
\end{alignat*}

Then, when $n$ goes to infinity

$$ \frac{B\left(i, n-i-\frac{5}{2} +1\right)}{B(i, n+1-i)}\underset{n\to+\infty}{\sim} \frac{1}{(1- \frac{i}{n+1})^{\frac{5}{2}}}.$$

Hence, we obtain 

$$ I_i^n =\frac{B\left(i, n-i-\frac{5}{2} +1\right)}{B(i, n+1-i)} E_i^n \underset{n\to+\infty}{\sim} \frac{1}{n} \frac{\frac{i}{n+1}}{(1- \frac{i}{n+1})^\frac{3}{2}}. $$

Finally, 
$$I_i^n \leq  \frac{2}{n} \frac{\frac{i}{n+1}}{(1- \frac{i}{n+1})^\frac{3}{2}}.$$ 

and 

\begin{alignat*}{1}
\frac{\eta}{ \sqrt{n}} \displaystyle\sum_{i= \lfloor n \alpha \rfloor}^{n-2}  \mathbb{E} \left( \left|L(U_{(i)}) \right| \frac{\left(U_{(i)} - \frac{i}{n+1}\right)^2}{2} \right) 
& \leq \frac{\eta}{2 \sqrt{n}} \displaystyle\sum_{i=\lfloor n \alpha \rfloor}^{n-2}  \mathbb{E} \left( \frac{(U_{(i)} - \frac{i}{n+1})^2}{\left(U_{(i)} - 1\right)^\frac{5}{2}} \right) \\
& \leq \frac{\eta}{ \sqrt{n}} \frac{1}{n} \displaystyle\sum_{i=\lfloor n \alpha \rfloor}^{n-2} \frac{\frac{i}{n+1}}{\left(1- \frac{i}{n+1}\right)^\frac{3}{2}} \\ & \sim \eta \frac{1}{n^{\frac{3}{2}}} \displaystyle\sum_{i=\lfloor n \alpha \rfloor}^{n-2} \frac{1}{\left(1- \frac{i}{n} \right)^{\frac{3}{2}}}.\\
\end{alignat*}
%We conclude in the same way as before. For the two other terms (when $i=n$ and $i=n-1$) we have to make integration by part to express integrals as Beta functios). Let us just explain the case where $i=n$. We know that $U_{(n)}\sim \mathcal{B}(n, 1)$, then we want to compute 
%
%\begin{small}
%\begin{equation*}
%\begin{aligned}
%\mathbb{E} \left(\frac{\left(U_{(n)} - \frac{n}{n+1}\right)^2}{\left(U_{(n)} - 1\right)^{\frac{5}{2}}} \right) & = \frac{1}{B(n, 1)} \int_0^1 \left(x-\frac{n}{n+1}\right)^2 (x-1)^{\frac{5}{2}} x^{n-1} dx \\
%& =  \frac{1}{B(n, 1)} \left[ \int_0^1 (x-1)^{- \frac{1}{2}}x^{n-1} dx - \frac{2}{n+1} \int_0^1 (x-1)^{-\frac{3}{2}}x^{n-1}dx
% + \frac{1}{(n+1)^2} \int_0^1 (1-x)^{- \frac{5}{2}}x^{n-1} dx \right] \\
%& =  \frac{1}{B(n, 1)} \left[ B\left(n, \frac{1}{2} \right) + \frac{4(n-1)}{n+1} \int_0^1 (x-1)^{- \frac{1}{2}} x^{n-2} dx 
%+ \frac{4(n-1)(n-2)}{3(n+1)^2} \int_0^1 (x-1)^{- \frac{1}{2}} x^{n-3} dx \right] \\
%& =  \frac{1}{B(n, 1)} \left[ B\left( n, \frac{1}{2} \right) + \frac{4(n-1)}{(n+1)} B\left(n-1, \frac{1}{2} \right) + \frac{4(n-1)(n-2)}{3(n+1)^2} B \left( n-2, \frac{1}{2} \right) \right] = \mathcal{O}(\sqrt{n}).\\
%\end{aligned}
%\end{equation*}
%\end{small}

Finally, $R_n'$ converges in $L^1$ to 0 and so in probability. Hence, $R_n$ converges to 0 in probability.

\vspace{0.3cm}

\textbf{Step 3 : Identification of the limit}

\vspace{0.3cm}

Our goal is to find the limit of $\sqrt{n}Q_{n}$. Let us reorganize the expression of $Q_{n}$ to have a more classical form (sum of independent random variables) and to allow the use of the Proposition \ref{corlinde}.

Denoting by $$ \bar{Y}= \frac{\displaystyle\sum_{j=1}^{n+1}Y_{j}}{n+1},$$ we have thanks to (\ref{representation})

\begin{small}
\begin{equation*}
\begin{aligned}
Q_{n} &=  \frac{1}{n}\displaystyle\sum_{i=\lfloor n \alpha \rfloor +1}^{n} \left( \frac{\displaystyle\sum_{j=1}^{i}Y_{j}}{\displaystyle\sum_{k=1}^{n+1}Y_{k}} - \frac{i}{n+1} \right) l\left(\frac{i}{n+1}\right) \\
& = \frac{1}{n}\displaystyle\sum_{j=1}^{n} \left[  \left( \frac{Y_{j}}{\displaystyle\sum_{k=1}^{n+1}Y_{k}} - \frac{1}{n+1} \right) \displaystyle\sum_{i=\sup(\lfloor n \alpha \rfloor +1, j)}^n l\left(\frac{i}{n+1}\right) \right] \\
&= \frac{n+1}{\displaystyle\sum_{k=1}^{n+1}Y_{k}} \frac{1}{n(n+1)} \left[    \sum_{j=1}^{\lfloor n \alpha \rfloor +1} \left((Y_{j} - \bar{Y}) \displaystyle\sum_{i= \lfloor n \alpha \rfloor +1}^{n} l\left(\frac{i}{n+1}\right) \right) +  \displaystyle\sum_{j= \lfloor n \alpha \rfloor +2}^{n} \left( (Y_{j}- \bar{Y}) \displaystyle\sum_{i=j}^{n} l\left(\frac{i}{n+1}\right) \right) \right]. \\
\end{aligned} 
\end{equation*}
\end{small}

where we have permuted the two sums. The law of large numbers gives that $\bar{Y}$ converges a.s to 1 when $n$ goes to infinity. Then, thanks to Slutsky's lemma, we only need to study

 $$  \frac{1}{n(n+1)} \left[    \sum_{j=1}^{\lfloor n \alpha \rfloor +1} \left((Y_{j} - \bar{Y}) \displaystyle\sum_{i= \lfloor n \alpha \rfloor +1}^{n} l\left(\frac{i}{n+1}\right) \right) +  \displaystyle\sum_{j= \lfloor n \alpha \rfloor +2}^{n} \left( (Y_{j}- \bar{Y}) \displaystyle\sum_{i=j}^{n} l\left(\frac{i}{n+1}\right) \right) \right].$$

We set $\forall j \leq n$, $G^{n}_{j}:= \displaystyle\sum_{i=j}^{n} l\left(\frac{i}{n+1}\right),$  $G^{n}_{n+1}:=0$, $H^{n}: = \displaystyle\sum_{j= \lfloor n \alpha \rfloor +2}^{n} G_{j}$. Then 
 
 \begin{small}
 \begin{equation*}
 \begin{aligned}
Q_{n} & =  \frac{1}{n(n+1)} \left[ \displaystyle\sum_{j=1}^{\lfloor n \alpha \rfloor +1} \left( \frac{(n - \lfloor n \alpha \rfloor )G^{n}_{ \lfloor n \alpha \rfloor +1}- H^{n}}{n+1}  \right) Y_{j} 
 + \displaystyle\sum_{\lfloor n \alpha \rfloor +2}^{n+1} \left(G^{n}_{j} - \frac{H^{n}}{n+1} + G^{n}_{\lfloor n \alpha \rfloor +1} \frac{-1 - \lfloor n \alpha \rfloor}{n+1} \right) Y_{j} \right] \\
 & =\frac{1}{n(n+1)} \Bigg[ \displaystyle\sum_{j=1}^{\lfloor n \alpha \rfloor+1} \left( \frac{(n - \lfloor n \alpha \rfloor)G^{n}_{ \lfloor n \alpha \rfloor +1}- H^{n}}{n+1}  \right) (Y_{j}-1) \\
 &+ \displaystyle\sum_{\lfloor n \alpha \rfloor +2}^{n+1} \left(G^{n}_{j} - \frac{H^{n}}{n+1} + G^{n}_{\lfloor n \alpha \rfloor +1} \frac{-1 - \lfloor n \alpha \rfloor}{n+1} \right) (Y_{j}-1) \Bigg], \\
\end{aligned}
\end{equation*}
 \end{small}

because 

  \begin{small}
 
 \begin{alignat*}{1}
& \frac{1}{n(n+1)} \left[ \displaystyle\sum_{j=1}^{\lfloor n \alpha \rfloor +1} \left( \frac{n - \lfloor n \alpha \rfloor)G^{n}_{ \lfloor n \alpha \rfloor +1}- H^{n}}{n+1}  \right)+ \displaystyle\sum_{\lfloor n \alpha \rfloor +2}^{n+1} \left(G^{n}_{j} - \frac{H^{n}}{n+1} + G^{n}_{\lfloor n \alpha \rfloor +1} \frac{-1 - \lfloor n \alpha \rfloor}{n+1} \right) \right] \\
& =\frac{1}{n(n+1)} \Bigg[ \left( \frac{G^{n}_{\lfloor n \alpha \rfloor} (n- \lfloor n \alpha \rfloor) -H^{n} }{n+1}\right)(\lfloor n \alpha \rfloor +1) \\
&+ \left( \frac{G^{n}_{\lfloor n \alpha \rfloor} (-1 - \lfloor n \alpha \rfloor)}{n+1}(n - \lfloor n \alpha \rfloor ) - \frac{H^{n}}{n+1}(n- \lfloor n \alpha \rfloor) + H^{n}\right) \Bigg]\\
& = 0.
 \end{alignat*}
 
 \end{small}

Finally, we obtain
 
 \begin{small}
 \begin{equation*}
 \begin{aligned}
Q_{n} & =
\frac{1}{n(n+1)} \Bigg[ \displaystyle\sum_{j=1}^{\lfloor n \alpha \rfloor +1} \left( \frac{(n - \lfloor n \alpha \rfloor)G^{n}_{ \lfloor n \alpha \rfloor +1}- H^{n}}{n+1}  \right) (Y_{j}-1) + \displaystyle\sum_{\lfloor n \alpha \rfloor +2}^{n+1} \left(G^{n}_{j} - \frac{H^{n}}{n+1} + G^{n}_{\lfloor n \alpha \rfloor +1} \frac{-1 - \lfloor n \alpha \rfloor}{n+1} \right) (Y_{j}-1) \Bigg] \\
 & = \frac{1}{n+1} \displaystyle\sum_{j=1}^{n+1} \alpha_{j, n} (Y_{j}-1),\\
\end{aligned}
\end{equation*}
\end{small}

where 

$$\alpha_{j, n}= \left( \frac{(n - \lfloor n \alpha \rfloor)G^{n}_{ \lfloor n \alpha \rfloor +1}- H^{n}}{n(n+1)}  \right), \, \, \forall j \leq \lfloor n \alpha \rfloor +1$$

and 

$$\alpha_{j, n}= \left( \frac{G^{n}_{j}(n+1) - H^{n} - G^{n}_{\lfloor n \alpha \rfloor +1} (1 + \lfloor n \alpha \rfloor)}{n(n+1)} \right) , \, \, \forall j \geq  \lfloor n \alpha \rfloor + 2.$$

Let us check the assumptions of the Proposition \ref{corlinde}. To begin with, let us show that $\sigma_{n}^{2}$ converges. We have 

\begin{alignat*}{1}
\sigma_{n}^{2} & =\frac{1}{n+1} \displaystyle\sum_{j=1}^{n+1} \alpha_{j, n}^{2} \\
&= \frac{\lfloor n \alpha \rfloor +1}{n+1} \left[\frac{(n - \lfloor n \alpha \rfloor)G^{n}_{ \lfloor n \alpha \rfloor +1}- H^{n}}{n(n+1)}\right]^{2}  \\
& +  \frac{1}{n+1} \displaystyle\sum_{j=\lfloor n \alpha \rfloor +2}^{n+1}\left( \frac{G^{n}_{j}(n+1)}{n(n+1)} \right)^{2} +2\frac{1}{n+1} \displaystyle\sum_{j=\lfloor n \alpha \rfloor +2}^{n+1}  \frac{G^{n}_{j}(n+1)( - H^{n} - G^{n}_{\lfloor n \alpha \rfloor +1} (1 + \lfloor n \alpha \rfloor))}{n^{2}(n+1)^{2}} \\ 
& +  \frac{1}{n+1} \displaystyle\sum_{j=\lfloor n \alpha \rfloor +2}^{n+1}\left(\frac{- H^{n} - G^{n}_{\lfloor n \alpha \rfloor +1} (1 + \lfloor n \alpha \rfloor)}{n(n+1)} \right)^{2}. \\
\end{alignat*}

Let us work with the two terms which depend on $G^{n}_{j}$. The first term can be expanded as

\begin{small}

\begin{alignat*}{1}
 \frac{1}{n+1} \displaystyle\sum_{j=\lfloor n \alpha \rfloor +2}^{n+1}\left( \frac{G^{n}_{j}(n+1)}{n(n+1)} \right)^{2} & = \frac{1}{n^{2}(n+1)} \displaystyle\sum_{j=\lfloor n \alpha \rfloor +2}^{n+1}\Big( G^{n}_{j} \Big)^{2} \\
 & =  \frac{1}{n^{2}(n+1)} \displaystyle\sum_{j=\lfloor n \alpha \rfloor +2}^{n+1} \left( \displaystyle\sum_{i=j}^{n} l\left(\frac{i}{n+1}\right)\right)^{2} \\
 & = \frac{1}{n(n+1)} \displaystyle\sum_{i_{1}=\lfloor n \alpha \rfloor +2}^{n+1} \displaystyle\sum_{i_{2}=\lfloor n \alpha \rfloor +2}^{n+1} \left(\frac{-1 + (i_{1} \wedge i_{2}) - \lfloor n \alpha \rfloor}{n+1}\right)l\left(\frac{i_{1}}{n+1}\right)l\left(\frac{i_{2}}{n+1}\right). \\
\end{alignat*}

\end{small}

The second term may be rewritten as 

\begin{small}
%\begin{alignat*}{1}
$$ 2\frac{1}{n+1} \displaystyle\sum_{j=\lfloor n \alpha \rfloor +2}^{n+1}  \frac{G^{n}_{j}(n+1)( - H^{n} - G^{n}_{\lfloor n \alpha \rfloor +1} (1 + \lfloor n \alpha \rfloor))}{n^{2}(n+1)^{2}}  
= -2\frac{(H^{n})^{2}}{n^{2}(n+1)^{2}} - 2 \frac{(1 + \lfloor n \alpha \rfloor )}{(n+1)^{2}n^{2}} H^{n} G^{n}_{ \lfloor n \alpha \rfloor +1}.$$
%\end{alignat*}
\end{small}

Finally,

\begin{alignat*}{1}
\sigma_{n}^{2}& = \frac{\lfloor n \alpha \rfloor +1}{n+1} \left[\frac{(n - \lfloor n \alpha \rfloor)G^{n}_{ \lfloor n \alpha \rfloor +1}- H^{n}}{n(n+1)}\right]^{2}  \\
& +  \frac{1}{n(n+1)} \displaystyle\sum_{i_{1}=\lfloor n \alpha \rfloor +2}^{n+1} \displaystyle\sum_{i_{2}=\lfloor n \alpha \rfloor +2}^{n+1} \left(\frac{-1}{n+1}
 + \frac{\min(i_{1}, i_{2})}{n+1} - \frac{\lfloor n \alpha \rfloor}{n+1} \right) l(\frac{i_{1}}{n+1})l(\frac{i_{2}}{n+1}) \\
 & -2\frac{(H^{n})^{2}}{n^{2}(n+1)^{2}} - 2 \frac{(1 + \lfloor n \alpha \rfloor )}{(n+1)^{3}n^{2}} H^{n} G^{n}_{ \lfloor n \alpha \rfloor +1} +  \frac{n - \lfloor n \alpha \rfloor -1 }{n+1} \left(\frac{ H^{n} + G^{n}_{\lfloor n \alpha \rfloor +1} (1 + \lfloor n \alpha \rfloor)}{n(n+1)} \right)^{2}. \\
 \end{alignat*}
 
 Let us first notice that, if we denote  $$K^{n}= \displaystyle\sum_{i_{1}=\lfloor n \alpha \rfloor +2}^{n+1} \displaystyle\sum_{i_{2}=\lfloor n \alpha \rfloor +2}^{n+1} \frac{\min(i_{1}, i_{2})}{n+1}  l\left(\frac{i_{1}}{n+1}\right)l\left(\frac{i_{2}}{n+1}\right) $$ and $$T^{n} = \displaystyle\sum_{i =\lfloor n \alpha \rfloor }^{n} \frac{i}{n} l\left( \frac{i}{n+1}\right)$$ then
 
 $$ H^{n} = nT^{n} - (\lfloor n \alpha \rfloor +1) G^{n}_{ \lfloor n \alpha \rfloor +1}.$$
 
So that 
\begin{equation*}
\begin{aligned}
\sigma_{n}^{2} & \underset{n\to+\infty}{\sim} \alpha\frac{(G^{n}_{ \lfloor n \alpha \rfloor +1}-T^{n})^{2}}{n^{2}} + \frac{K^{n} - \alpha (G^{n}_{ \lfloor n \alpha \rfloor +1})^{2}}{n^{2}} - \frac{-2(T^{n} - \alpha G^{n}_{ \lfloor n \alpha \rfloor +1})^{2}}{n^{2}}\\
& -2 \frac{\alpha(G^{n_{ \lfloor n \alpha \rfloor +1}}T^{n}- \alpha (G^{n}_{ \lfloor n \alpha \rfloor +1})^{2})}{n^{2}} + \frac{(1- \alpha)(T^{n})^{2}}{n^{2}} \\
& \underset{n\to+\infty}{\sim} \frac{K^{n}-( T^{n})^{2}}{n^{2}}. \\
\end{aligned}
\end{equation*}

Let us show that this last quantity converges to $\sigma^2 = \int_{\alpha}^1 \int_{\alpha}^1 \frac{\min(x, y) -xy}{f(F^{-1}(x)) f(F^{-1}(y))} < \infty$. Indeed it is a generalized Rieman sum. First, we show that the function 
$$g : (x, y) \mapsto \frac{\min(x, y) -xy}{f(F^{-1}(x)) f(F^{-1}(y))}$$ is integrable on $]\alpha, 1[ \times ]\alpha, 1[$. Indeed, around 1, 

$$g(x, y) = O \left(\frac{\min(x, y) -xy}{(1-x)^{\frac{3}{2}- \epsilon_l}(1-y)^{\frac{3}{2}- \epsilon_l}} \right)$$

which is integrable on this domain because for $\beta$ close to 1

$$\int_{\alpha}^{\beta} \int_{\alpha}^{\beta} \frac{\min(x, y) -xy}{(1-x)^{\frac{3}{2}- \epsilon_l}(1-y)^{\frac{3}{2}- \epsilon_l}} dxdy \sim C(\alpha) \beta(1- \beta)^{2 \epsilon_l}.$$

and $\epsilon_l>0$ (here again we see that we really need this $\epsilon_L>0$).

As we have already seen, the results on Riemann's sum in dimension 2, give by the continuity of the function $(x, y) \mapsto \frac{\min(x, y) -xy}{f(F^{-1}(x)) f(F^{-1}(y))}$ that for all $\alpha <\beta <1$ :

$$
\sigma_{n, \beta}^2  := \frac{1}{n^2}\sum_{i_1=\lfloor n \alpha \rfloor}^{\lfloor n \beta \rfloor} \sum_{i_1=\lfloor n \alpha \rfloor}^{\lfloor n \beta \rfloor} \frac{\frac{\min(i_1, i_2)}{n} - \frac{i_1i_2}{n^2} }{f\left(F^{-1}\left(\frac{i_1}{n+1}\right)\right) f\left(F^{-1}\left(\frac{i_2}{n+1}\right)\right)} 
 \longrightarrow \int_{\alpha}^{\beta} \int_{\alpha}^{\beta} \frac{\min(x, y) -xy}{f(F^{-1}(x)) f(F^{-1}(y))} dxdy. 
$$

We have to study the remaining part of the sum to conclude. Let us fix $\beta$ close to 1 and deal with

$$r_{n, \beta}^2  := \frac{1}{n^2}\sum_{i_1=\lfloor n \beta \rfloor}^{n} \sum_{i_1=\lfloor n \beta \rfloor}^{n} \frac{\frac{\min(i_1, i_2)}{n} - \frac{i_1i_2}{n^2} }{f \left(F^{-1}\left(\frac{i_1}{n+1}\right)\right) f\left(F^{-1}\left(\frac{i_2}{n+1}\right)\right)}.$$

First of all, let us notice that

$$r_{n, \beta}^2= \int_{\beta}^1 \int_{\beta}^1 g\left(\frac{\lfloor n x \rfloor}{n},\frac{\lfloor n y \rfloor}{n} \right) dx dy. $$

We want to show that $$\lim\limits_{n \to + \infty} r_{n, \beta}^2= \int_{\beta}^1\int_{\beta}^1 g(x, y) dx dy.$$ 

Let us first show that, using Lebesgue theorem, we can permute the limit in $n$ and the double integrale, in this way

$$\lim\limits_{n \to + \infty} r_{n, \beta}^2 = \int_{\beta}^1\int_{\beta}^1 \lim\limits_{n \to + \infty}g\left(\frac{\lfloor n x \rfloor}{n},\frac{\lfloor n y \rfloor}{n} \right).$$

\begin{itemize}
\item[1)] Let $(x, y)$ be fixed in $[\beta, 1[ \times [\beta, 1[$ and $n$. Then $$g\left(\frac{\lfloor n x \rfloor}{n},\frac{\lfloor n y \rfloor}{n} \right) \longrightarrow g(x, y)$$ by continuity. And $g$ is integrable on $[\beta, 1[ \times [\beta, 1[$ as we saw before. 
\item[2)] Let $(x, y)$ be fixed in $[\beta, 1[ \times [\beta, 1[$ and $n$. Let us denote $x_n = \frac{\lfloor n x \rfloor}{n}$ and $y_n=\frac{\lfloor n y \rfloor}{n}$. By hypothesis

$$g\left(\frac{\lfloor n x \rfloor}{n},\frac{\lfloor n y \rfloor}{n} \right) \leq C \frac{\min(x_n, y_n) -x_ny_n}{(1-x_n)^{\frac{3}{2}- \epsilon_l}(1-y_n)^{\frac{3}{2}- \epsilon_l}}.$$

By separating the two cases and using monotony we obtain that

$$g\left(\frac{\lfloor n x \rfloor}{n},\frac{\lfloor n y \rfloor}{n} \right) \leq C h(x, y)$$

where $$h:(x, y) \mapsto \frac{\min(x, y)}{\left(1- \min(x, y)\right)^{\frac{3}{2}- \epsilon_l}\left(1- \max(x, y)^{\frac{3}{2}- \epsilon_l-1}\right)}$$

is integrable on $[\beta, 1[ \times [\beta, 1[$. 

\end{itemize}

Then, the Lebesgue theorem allows us to permute integration and limit so that, we have shown that $\sigma_n^2 \longrightarrow \sigma^2$and the first assumption of Proposition \ref{corlinde} holds. 

\vspace{0.3cm}
Let us now deal with the second assumption of Proposotion \ref{corlinde} about the maximum of the $\alpha_{i, n}$. 

For $j \leq \lfloor n \alpha \rfloor +1$, we have 
 
 \begin{alignat*}{1}
 \alpha_{j, n} = & \frac{(n - \lfloor n \alpha \rfloor +1)G^{n}_{ \lfloor n \alpha \rfloor +1} - H^{n}}{n(n+1)}. \\
  \end{alignat*}
  
Using the previous computations, for $n$ large enough we have
 $$\frac{ (\alpha_{j, n})^{2} }{n \sigma^{2}_{n}} \underset{n \to + \infty}{\sim} \frac{(G^{n}_{ \lfloor n \alpha \rfloor +1} - \frac{T^{n}}{n})^{2}}{K_{n} - \frac{T_{n}^{2}}{n^{2}}} \frac{1}{n}, $$

But the convergence $$ \frac{\left(K^{n} - \left(\frac{T^{n}}{n}\right)\right)^{2}}{n^{4} } \underset{n \to + \infty}{\longrightarrow} \int_{\alpha}^{1} \int_{\alpha}^{1} (\min(x, y) - xy) l(x) l(y) dxdy  $$ implies the convergence $$\frac{(G^{n}_{ \lfloor n \alpha \rfloor +1} - \frac{T^{n}}{n})^{2}}{n^{4}} \underset{n \to + \infty}{\longrightarrow} \int_{\alpha}^{1} (1-x) l(x) dx.$$ Indeed 
 
$$\int_{\alpha}^{1} \int_{\alpha}^{1} (\min(x, y) -xy)l(x) l(y) dx dy = \int_{\alpha}^{1} \int_{\alpha}^{1} (y(1-x)) l(x) l(y) dx dy + \int_{\alpha}^{1} x l(x) \int_{x}^{1} (1-y)l(y) dy dx.$$
 
So that,
 
 $$\frac{ (\alpha_{j, n})^{2} }{n \sigma^{2}_{n}} \underset{n \to + \infty}{\sim} \frac{C}{n} \underset{n \to + \infty}{\longrightarrow} 0$$

when $n$ goes to infinity. If  $j \geq \lfloor n \alpha \rfloor +2 $ the same property holds as
 
 $$\alpha_{j, n}= \frac{(n+1)G_{j}^{n}-H^{n} - G^{n}_{\lfloor n \alpha \rfloor}( \lfloor n \alpha \rfloor +1) }{n(n+1)} \underset{n\to+\infty}{\sim} \frac{(n+1) G_{j^{n}} - T^{n}}{n^{2}}. $$
 
Hence, we may apply Proposition \ref{corlinde} and conclude that
 
 $$\sqrt{n}Q_{n} \Longrightarrow \mathcal{N}(O, \sigma^{2}).$$
 
where $\sigma^{2} = \int_{\alpha}^{1} \int_{\alpha}^{1} (\min(x, y) -xy) l(x) l(y) dxdy$. Finally, just multiply by $(1-\alpha)^{-1}$ to get the final result.

\vspace{0.3cm}

\textbf{Step 4 : Conclusion}

The Slutsky lemma allows to conclude using the results of steps 1 and 3.

\end{proof}

\section{Conclusion}

The superquantile was introduced because the usual quantile was not subadditive and does not give enough information on what was happening in the tail-distribution. This quantity is interesting because it satisfies the axioms of a coherent measure of risk. In this paper, we have introduced a new coherent measure of risk with the help of the Bregman divergence associated to a strictly convex function $\gamma$. They are rich tools because of the diversity of the functions $\gamma$ that can be chosen according to the problem we study. We shown throught different examples that a judicious choice for the function $\gamma$ can make the Bregman superquantile a more interesting measure of risk than the classical superquantile (Bregman superquantile can be finite even in infinite mean model, it is continous in more cases, more \textit{robust} etc...).  Moreover, we have introduced a Monte Carlo estimator of the Bregman superquantile which is statistically powerful thanks to the strictly convex (and so settling) function $\gamma$. 

The theoretical properties obtained in this paper are confirmed on several numerical test cases.
More precisely, geometrical and harmonic superquantiles are more robust than the classical superquantile.
This robustness is particularly important in in finance and risk assessment studies.
For instance, in risk assessment, when dealing with real data, geometrical and harmonic statistics have been proved to be more relevant than classical statistics.
For example \cite{chahan04} prove the usefulness of the geometrical mean and variance for the analysis of air quality measurements.
As an illustration, we have applied the geometrical and harmonic superquantiles on real data coming from a radiological impact code used in the nuclear industry.

Further studies will try to apply these criteria in probabilistic assessment of physical components reliability using numerical simulation codes \cite{derdev08}.
However,  Monte Carlo estimators are no longer applicable in this context and efficient estimators have to be developed.
Ideas involving response surface technique should be developed (see for example \cite{cangar08} for quantile estimation and \cite{becgin12} for rare event probability estimation).

Finally those Bregman superquantiles are interesting because they can be linked with several previous works in economy. There is for example a strong link between capital allocations and Bregman superquantile. It would be interesting to apply our results on this economic theory in a future work. 
\newpage

\begin{center}
\textbf{ACKNOWLEDGEMENT}
\end{center}

We warmly thank two anonymous referees for their careful reading that helped us to improve the paper.

\bibliographystyle{plain}
\bibliography{bibi}

\begin{thebibliography}{10}

\bibitem{ref1}
Carlo Acerbi.
\newblock Spectral measures of risk: a coherent representation of subjective
  risk aversion.
\newblock {\em Journal of Banking \& Finance}, 26(7):1505--1518, 2002.

\bibitem{On}
Carlo Acerbi and Dirk Tasche.
\newblock On the coherence of expected shortfall.
\newblock {\em Journal of Banking and Finance}, 26(7):1487--1503, 2002.

\bibitem{coherent}
Philippe Artzner, Freddy Delbaen, Jean-Marc Eber, and David Heath.
\newblock Coherent measures of risk.
\newblock {\em Risk management: value at risk and beyond}, 10:145, 2002.

\bibitem{becgin12}
J.~Bect, D.~Ginsbourger, L.~Li, V.~Picheny, and E.~Vazquez.
\newblock Sequential design of computer experiments for the estimation of a
  probability of failure.
\newblock {\em Statistics and Computing}, 22:773--793, 2012.

\bibitem{bencha89}
A.~Ben-Tal, A.~Charnes, and M.~Teboulle.
\newblock Entropic means.
\newblock {\em Journal of Mathematical Analysis and Applications},
  139(2):537--551, 1989.

\bibitem{ref6}
Eric Beutner and Henryk Z{\"a}hle.
\newblock A modified functional delta method and its application to the
  estimation of risk functionals.
\newblock {\em Journal of Multivariate Analysis}, 101(10):2452--2463, 2010.

\bibitem{bre67}
L.~M. Bregman.
\newblock The relaxation method of finding the common point of convex sets and
  its application to the solution of problems in convex programming.
\newblock {\em USSR computational mathematics and mathematical physics},
  7(3):200--217, 1967.

\bibitem{cangar08}
C.~Cannamela, J.~Garnier, and B.~Iooss.
\newblock Controlled stratification for quantile estimation.
\newblock {\em Annals of Apllied Statistics}, 2:1554--1580, 2008.

\bibitem{gini}
Satya~R Chakravarty.
\newblock Extended gini indices of inequality.
\newblock {\em International Economic Review}, 29:147--156, 1988.

\bibitem{chahan04}
J.C. Chang and S.R. Hanna.
\newblock Air quality model performance evaluation.
\newblock {\em Meteorology and Atmospheric Physics}, 87:167--196, 2004.

\bibitem{uniform}
Herman Chernoff, Joseph~L Gastwirth, and M~Vernon Johns.
\newblock Asymptotic distribution of linear combinations of functions of order
  statistics with applications to estimation.
\newblock {\em The Annals of Mathematical Statistics}, 16:52--72, 1967.

\bibitem{var}
Victor Chernozhukov and Len Umantsev.
\newblock Conditional value-at-risk: Aspects of modeling and estimation.
\newblock 26(1):271--292, 2000.

\bibitem{Condi}
S.Y Chun, A.~Shapiro, and S.~Uryasev.
\newblock Conditional value at risk and average value-at-risk : estimation and
  asymptotics.
\newblock {\em Munich Personal RePEc Archive}, 2011.

\bibitem{robu}
Rama Cont, Romain Deguest, and Giacomo Scandolo.
\newblock Robustness and sensitivity analysis of risk measurement procedures.
\newblock {\em Quantitative Finance}, 10(6):593--606, 2010.

\bibitem{order}
David and Nagaraja.
\newblock {\em Order Statistics}.
\newblock Wiley, 2003.

\bibitem{derdev08}
E.~{de Rocquigny}, N.~Devictor, and S.~Tarantola, editors.
\newblock {\em Uncertainty in industrial practice}.
\newblock Wiley, 2008.

\bibitem{livreaurelien}
Luc Devroye, L{\'a}szl{\'o} Gy{\"o}rfi, and G{\'a}bor Lugosi.
\newblock {\em A probabilistic theory of pattern recognition}, volume~31.
\newblock Springer Science \& Business Media, 2013.

\bibitem{eco1}
Edward Furman and Ri{\v{c}}ardas Zitikis.
\newblock Weighted premium calculation principles.
\newblock {\em Insurance: Mathematics and Economics}, 42(1):459--465, 2008.

\bibitem{sqeli}
Tilmann Gneiting and Roopesh Ranjan.
\newblock Comparing density forecasts using threshold-and quantile-weighted
  scoring rules.
\newblock {\em Journal of Business \& Economic Statistics}, 29, 2012.

\bibitem{ioovan06}
B.~Iooss, F.~{Van Dorpe}, and N.~Devictor.
\newblock Response surfaces and sensitivity analyses for an environmental model
  of dose calculations.
\newblock {\em Reliability Engineering and System Safety}, 91:1241--1251, 2006.

\bibitem{jaclav06}
J.~Jacques, C.~Lavergne, and N.~Devictor.
\newblock Sensitivity analysis in presence of model uncertainty and correlated
  inputs.
\newblock {\em Reliability Engineering and System Safety}, 91:1126--1134, 2006.

\bibitem{ref14}
Bruce~L Jones, Madan~L Puri, and Ri{\v{c}}ardas Zitikis.
\newblock Testing hypotheses about the equality of several risk measure values
  with applications in insurance.
\newblock {\em Insurance: Mathematics and Economics}, 38(2):253--270, 2006.

\bibitem{ref15}
Bruce~L Jones and Ri{\v{c}}ardas Zitikis.
\newblock Empirical estimation of risk measures and related quantities.
\newblock {\em North American Actuarial Journal}, 7(4):44--54, 2003.

\bibitem{ref16}
Bruce~L Jones and Ri{\v{c}}ardas Zitikis.
\newblock Risk measures, distortion parameters, and their empirical estimation.
\newblock {\em Insurance: Mathematics and Economics}, 41(2):279--297, 2007.

\bibitem{Rieman}
L.~Kuipers and H.~Niederreiter.
\newblock {\em Uniform Distribution of Sequences}.
\newblock Wiley-interscience, 1974.

\bibitem{kusu}
Shigeo Kusuoka.
\newblock On law invariant coherent risk measures.
\newblock In {\em Advances in mathematical economics}, volume~3, pages 83--95.
  Springer, 2001.

\bibitem{croissance}
Georg~Ch Pflug.
\newblock Some remarks on the value-at-risk and the conditional value-at-risk.
\newblock In {\em Probabilistic constrained optimization}, pages 272--281.
  Springer, 2000.

\bibitem{archiv}
Iosif Pinelis.
\newblock An optimal three-way stable and monotonic spectrum of bounds on
  quantiles: A spectrum of coherent measures of financial risk and economic
  inequality.
\newblock {\em Risks}, 2(3):349--392, 2014.

\bibitem{rocroy13}
R.~Terry Rockafellar, Johannes~O. Royset, and S.~I. Miranda.
\newblock Superquantile regression with applications to buffered reliability,
  uncertainty quantification, and conditional value-at-risk.
\newblock {\em European Journal of Operational Research}, 234(1):140--154,
  2014.

\bibitem{Rockaconvex}
R.~Tyrrell Rockafellar.
\newblock {\em Convex analysis}.
\newblock Princeton University Press, 1997.

\bibitem{def}
R.~Tyrrell Rockafellar.
\newblock Coherent approaches to risk in optimization under uncertainty.
\newblock In {\em In Tutorials in Operations Research INFORMS}, pages 38--61,
  2007.

\bibitem{rocroy13a}
R~Tyrrell Rockafellar and Johannes~O Royset.
\newblock Random variables, monotone relations and convex analysis.
\newblock Technical report, DTIC Document, 2012.

\bibitem{rocury13}
R.T. Rockafellar and S.~Uryasev.
\newblock The fundamental risk quadrangle in risk management, optimization and
  statistical estimation.
\newblock {\em Surveys in Operations Research and Management Science},
  18(1):33--53, 2013.

\bibitem{rocury00}
S.Uryasev R.T.~Rockafellar.
\newblock Optimization of conditional value-at-risk.
\newblock {\em Journal of risk}, 2:21--42, 2000.

\bibitem{cono1}
Yongsheng Song and Jia-An Yan.
\newblock Risk measures with comonotonic subadditivity or convexity and
  respecting stochastic orders.
\newblock {\em Insurance: Mathematics and Economics}, 45(3):459--465, 2009.

\bibitem{introproba}
Stanislav Uryasev.
\newblock {\em Introduction to the theory of probabilistic functions and
  percentiles (value-at-risk)}.
\newblock Springer, 2000.

\bibitem{Vander}
A.~W. Van~Der Vaart.
\newblock {\em Asymptotic statistics}.
\newblock Cambridge Series in Statistical and Probabilistic Mathematics.
  Cambridge University Press, 1998.

\bibitem{ref24}
Shaun Wang.
\newblock Premium calculation by transforming the layer premium density.
\newblock {\em Astin Bulletin}, 26(01):71--92, 1996.

\bibitem{ref26}
Shaun Wang and Jan Dhaene.
\newblock Comonotonicity, correlation order and premium principles.
\newblock {\em Insurance: Mathematics and Economics}, 22(3):235--242, 1998.

\bibitem{ref25}
Shaun~S Wang.
\newblock A class of distortion operators for pricing financial and insurance
  risks.
\newblock {\em Journal of risk and insurance}, pages 15--36, 2000.

\bibitem{ref13}
Julia~L Wirch and Mary~R Hardy.
\newblock Distortion risk measures. coherence and stochastic dominance.
\newblock In {\em International Congress on Insurance: Mathematics and
  Economics}, pages 15--17, 2001.

\bibitem{lui}
Henryk Z{\"a}hle.
\newblock Rates of almost sure convergence of plug-in estimates for distortion
  risk measures.
\newblock {\em Metrika}, 74(2):267--285, 2011.

\bibitem{eli}
Johanna~F Ziegel.
\newblock Coherence and elicitability.
\newblock {\em Mathematical Finance}, 2014.

\end{thebibliography}
\nocite{*}

\end{document}